\newtheorem{Theorem}{Theorem}
\newtheorem{Lemma}[Theorem]{Lemma}
\newtheorem{Proposition}[Theorem]{Proposition}
\newtheorem{Question}[Theorem]{Question}
\newtheorem{Corollary}[Theorem]{Corollary}
\theoremstyle{remark}
\newtheorem{Definition}[Theorem]{Definition}
\newtheorem{Remark}[Theorem]{Remark}
\newtheorem{Example}[Theorem]{Example}
\newcommand{\ZZ}{\mathbb Z}
\newcommand{\QQ}{\mathbb Q}
\newcommand{\RR}{\mathbb R}
\newcommand{\TT}{\mathbb T}
\newcommand{\EE}{\mathbb E}
\newcommand{\CC}{\mathbb C}
\newcommand{\FF}{\mathbb F}
\newcommand{\eg}{\emph{e.g.}}
\newcommand\CVD{{\hfill\hfil{\lower 2 pt\hbox{\vrule\vbox to 7pt
{\hrule width 6pt\vfill\hrule}\vrule}}}\vskip 0.5cm}
\newcommand{\pitilde}{\widetilde{\pi}}
\newcommand{\pfrak}{\mathfrak{p}}
\newcommand{\mfrak}{\mathfrak{m}}
\newcommand{\ev}{\operatorname{ev}}
\newcommand{\expC}{\exp_C}
\newcommand{\eC}{e_C}
\begin{document}

\title{On Certain Generating Functions in Positive Characteristic}

\author{F. Pellarin \& R. B. Perkins}

\address{F. Pellarin: Institut Camille Jordan, UMR 5208 Site de Saint-Etienne, 23 rue du Dr. P. Michelon, 42023 Saint-Etienne, France}

\email{federico.pellarin@univ-st-etienne.fr}

\address{R. B. Perkins: IWR, University of Heidelberg, Im Neuenheimer Feld 368, 69120 Heidelberg, Germany}
\email{rudolph.perkins@iwr.uni-heidelberg.de}

\date{\noindent January 2016. \smallskip \\ Part of this research occurred while the first author was supported by the ANR HAMOT. \\ The second author is supported by the Alexander von Humboldt Foundation.}

\maketitle

\begin{abstract}
We present new methods for the study of a class of generating functions introduced by the second author which carry some formal similarities with the Hurwitz zeta function. 
We prove functional identities which establish an explicit connection with certain deformations of the Carlitz logarithm introduced by M. Papanikolas and involve the Anderson-Thakur function and the Carlitz exponential function. 
They collect certain functional identities in families for a new class of $L$-functions introduced by the first author. 
This paper also deals with specializations at roots of unity of these generating functions, producing a link with Gauss-Thakur sums.
\keywords{Positive characteristic \and Carlitz module \and Anderson generating functions \and $L$-series \and Special values \and Periodic functions}
%\keywords{First keyword \and Second keyword \and More}
% \PACS{PACS code1 \and PACS code2 \and more}
%\subjectclass{MSC 11M38}
% \subclass{MSC code1 \and MSC code2 \and more}
\end{abstract}

\section{Introduction}

Let $A$ be the ring $\FF_q[\theta]$ of polynomials in an indeterminate $\theta$ with coefficients in the finite field of $q$ elements $\FF_q$, and let $A^+ \subseteq A$ denote the subset of monic elements in $\theta$. We denote by $K$
the fraction field of $A$ and by $K_\infty := \FF_q((1/\theta))$ the completion of $K$ at
the infinite place. We also denote by $\CC_\infty$ the completion of
an algebraic closure $K_\infty^{ac}$ of $K_\infty$; we endow it with
the norm $|\cdot|$ determined by $|\theta|=q$.

Let $\mathbb{T}_s$ be the standard Tate algebra in the indeterminates $t_1,\dots,t_s$ with
coefficients in $\CC_\infty$. Explicitly, it is the ring of formal series
\[\sum_{i_1,\dots,i_s \geq 0} c_{i_1,\dots,i_s} t_1^{i_1} \cdots t_s^{i_s} \in \CC_\infty[[t_1,\dots,t_s]]\]
such that $c_{i_1,\dots,i_s} \rightarrow 0$ as $i_1+\cdots + i_s \rightarrow \infty$. It can also be viewed as the completion of $\CC_\infty[t_1,\dots,t_s]$ for
the Gauss norm associated to the absolute value $|\cdot|$ of $\CC_\infty$; see \S \ref{andergenfunct} for the precise definition. 
The Tate algebra $\TT_s$ is endowed with the open and continuous $\FF_q[t_1,\dots,t_s]$-linear automorphism
$\tau:\TT_s \rightarrow \TT_s$ defined by $\tau(c)=c^{q}$, for
$c \in \CC_\infty$. For each $i \in \Sigma_s := \{1,2,\dots,s\}$, let $\chi_{t_i} : A \rightarrow \FF_q[t_i] \subseteq \TT_s$ be the $\FF_q$-algebra morphism determined by $\theta \mapsto t_i$, and note that much of what follows depends on these choices of algebra generators. Finally, we denote by $\EE_s$ the sub-algebra of $\TT_s$ whose elements are entire in the all variables $t_1,\dots,t_s$. When $s = 1$, we will write $t_1 = t, \TT_1 = \TT$ and $\EE_1 = \EE$.

The purpose of this note is to introduce new methods into the study of the properties of the $\TT_s$-valued functions
\[\psi_s(z) := \sum_{a\in A}\frac{\chi_{t_1}(a)\cdots\chi_{t_s}(a)}{z-a} = \sideset{}{'}\sum_{a\in A}\frac{\chi_{t_1}(a)\cdots\chi_{t_s}(a)}{z-a};\]
here (and elsewhere in this article) the primed summation indicates the exclusion of $0 \in A$ from the sum, and second equality follows from the fact that $\chi_t(0) = 0$. For all positive integers $s$, the series $\psi_s$ converges in $\TT_s$ for all $z \in \CC_\infty \setminus (A \setminus \{0\})$, and, in particular, $\psi_s$ converges in an open neighborhood of $z = 0$. These functions are a positive characteristic analog of the Hurwitz zeta function and were introduced by the second author as generating functions, when $s$ is fixed and $n \equiv s \pmod{q-1}$, for the following multivariate $L$-functions
\[L(\chi_{t_1}\cdots\chi_{t_s},n) := \sum_{a\in A^+}a^{-n}\chi_{t_1}(a)\cdots\chi_{t_s}(a);\]
see Lemma \ref{Lgenlem} below. For example, when $s = 1$, for all $z \in \CC_\infty$ such that $|z|<1$, we have $$\psi_1(z) = \sum_{k \geq 0} z^{k(q-1)} L(\chi_t,1+k(q-1)).$$

The $L$-functions above were introduced by the first author in \cite{PEL} and converge for all positive integers $n$ and $s$ to non-zero elements of $\TT_s(K_\infty)$, the standard Tate algebra in $t_1,\ldots,t_s$ with coefficients in the local field $K_\infty$. They may be regarded as complementary objects to the global $L$-functions in equal positive characteristic introduced by Goss (see \cite{PEL1}).

The authors plan to use the functions $\psi_s$ in a crucial way in a forth coming work developing the theory of $\TT_s$-valued vectorial modular forms introduced by the first author in \cite{PEL}. Indeed, these functions allow us to introduce a notion of regularity at infinity for weak $\TT_s$-valued vectorial modular forms on the Drinfeld upper-half plane which guarantees the finite rank of the $\TT$-modules of such forms of a given weight and type. Additionally, we think that the functions $\psi_s$ carry their own intrinsic interest, and we intend to demonstrate this by cataloging some of their finer properties in the sequel.

As a product of our labor, we find a new proof of a basic relation that exists between the $L$-series value $L(\chi_t,1)$ evaluated in $t$ at roots of unity and Gauss-Thakur sums. The precise statement is given in Theorem \ref{angpelomeg} below.

\subsection{Main Functions}
Let $d_0 = 1$ and, for all $i \geq 1$, let $d_i := \prod_{j = 0}^{i-1} (\theta^{q^i} - \theta^{q^j})$.
Define
\[\expC(z) := \sum_{i \geq 0} d_i^{-1} z^{q^i} \ \text{ and } \ \pitilde := -\lambda_\theta^q \prod_{i \geq 1} (1 - \theta^{1-q^i})^{-1} \in K_\infty(\lambda_\theta), \] respectively, the \emph{Carlitz exponential function} and a \emph{fundamental period of the Carlitz exponential}, the latter being unique up to the choice of $(q-1)$-th root $\lambda_\theta$ of $-\theta$ in the algebraic closure $K^{ac}$ of $K$. The Carlitz exponential defines an $\FF_q$-linear endomorphism of $\CC_\infty$, and its kernel is $\pitilde A$. Finally, set $\eC(z) := \expC(\pitilde z)$. This function is entire, $A$-periodic, and has as its only zeroes the elements of $A$ with multiplicity one.

We consider the following special case of an {\em Anderson generating function for the Carlitz module} studied by the first author in \cite{PEL0},
\[f_{t}(z) := \sum_{j \geq 0} \eC(z \theta^{-j-1}) t^j = \sum_{n\geq 0}\frac{(\pitilde z)^{q^n}}{(\theta^{q^n} - t)d_n} \in \TT,\]
where the equality was first established; see also \cite{EGP}.
From the equality above we see that $f_t$ is an entire function of $z$ and a meromorphic function of $t$ with simple poles at $t = \theta, \theta^q, \dots$. From the definition, we see that, for each $z \in \CC_\infty$, $f_t(z)$ satisfies the $\tau$-difference equation
\begin{equation} \label{AGFdiffeq}
\tau(f_t(z)) = \eC(z) + (t - \theta)f_t(z).\end{equation}
In particular, if $z=1$, we obtain $f_t(1) = \omega(t)$, the \emph{Anderson-Thakur function}, which satisfies 
\begin{equation}\label{taudiffomega} \tau(\omega) = (t-\theta)\omega, \end{equation} 
as $\eC(1) = 0$, and generates the rank one $\FF_q[t]$-submodule of $\TT$ of solutions $X \in \TT$ of the equation $\tau(X) = (t - \theta)X$. With this, one readily shows that
\[\omega(t)=\lambda_\theta\prod_{i\geq 0}\left(1-\frac{t}{\theta^{q^i}}\right)^{-1}\in\TT,\]
as shown in \cite[Lemma 2.5.4]{AT}; see also \cite[Lemma 3.1.4]{GAtm}.
%\footnote{Actually, this function was introduced as ${\bf C}$ just before Lemma 3.1.4 in \cite{GAtm}, and one may deduce from the properties of Anderson's scattering matrices that it satisfies the difference equation below.}
The function $\omega$ is in many ways a cousin of Euler's gamma function, see \emph{e.g.} the discussion in \cite{PEL}.

The difference equation \eqref{AGFdiffeq} also allows for a connection with the following function introduced by Papanikolas in \cite{PAP}
\[\bm{L}_\alpha(t) := \alpha + \sum_{j \geq 1}\frac{\alpha^{q^j}}{(t - \theta^q)(t - \theta^{q^2})\cdots(t - \theta^{q^j})}.\]
This function converges in $\TT$ for each $\alpha \in \CC_\infty$ such that $|\alpha| < q^{\frac{q}{q-1}}$. The function $\bm{L}_\alpha(t)$ is a deformation of the Carlitz logarithm function; indeed, one may evaluate the variable $t$ at $\theta$ to recover the Carlitz logarithm $\log_C(\alpha) = \bm{L}_{\alpha}(\theta)$ in this way. The function $\bm{L}_\alpha(t)$ plays a crucial role in Papanikolas' proof of a ``folklore conjecture:'' \emph{a $K$-linearly independent set of Carlitz logarithms of non-zero elements of $K^{ac}$ is algebraically independent over $K$}. Indeed, $\bm{L}_\alpha(t)$ is the main ingredient in the construction of a rigid analytic trivialization of Papanikolas' \emph{logarithm motive}. The reader may consult \cite{PAP} for the exact details of its use.

The following connection made by El-Guindy and Papanikolas \cite{EGP} between $\bm{L}_{\alpha}(t)$ and $f_t(z)$ will be of special interest. Let $z \in \CC_\infty$ be such that $|z| < 1$. Letting $\alpha = \eC(z)$, one has an identity of elements of $\TT$
\begin{equation} \label{papdiffeq}
\bm{L}_{\alpha}(t) = (\theta-t)f_t(z);\end{equation}
note that the right side above converges for all $z \in \CC_\infty$.

\subsubsection{A zeta function realization of Papanikolas' functions}
The next result was first obtained in \cite{PER} by the second author using interpolation polynomials and $\tau$-difference formalism, and we shall give a new proof by using the ultrametric maximum principle. We wish to stress here the connection with the function of Papanikolas described above. The proof of the following result appears in \S \ref{proofofthe1}.

\begin{Theorem}\label{thrudy1} Fix $z \in \CC_\infty$ such that $|z| < 1$, and let $\alpha = \eC(z)$. The following identity holds in $\TT$,
\begin{equation}\label{identityrudy}
\eC(z)\psi_1(z) = \frac{\widetilde{\pi}\bm{L}_{\alpha}(t)}{(\theta-t)\omega(t)}.
\end{equation}
\end{Theorem}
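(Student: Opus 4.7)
The plan is to use identity \eqref{papdiffeq} to rewrite the statement as $\eC(z)\psi_1(z)\omega(t) = \pitilde f_t(z)$, and to show that the difference
\[F(z) := \eC(z)\psi_1(z)\omega(t) - \pitilde f_t(z)\]
vanishes identically on $\CC_\infty$. Cancellation between the simple poles of $\psi_1$ at $a \in A \setminus \{0\}$ (residue $\chi_t(a)$) and the simple zeros of $\eC$ (whose derivative equals $\pitilde$ everywhere, by $\FF_q$-linearity) ensures that $F$ extends to an entire $\TT$-valued function of $z$, with $\lim_{z\to a}\eC(z)\psi_1(z) = \pitilde\chi_t(a)$. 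Moreover $F|_A \equiv 0$: immediate at $z = 0$, and at $a \neq 0$ the preceding residue combines with the identity $f_t(a) = \chi_t(a)\omega(t)$, which is established directly from the series $\sum_{n}(\pitilde z)^{q^n}/((\theta^{q^n}-t)d_n)$ using $\eC(1) = 0$.

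Next I would show $F$ is $A$-periodic in $z$. Reindexing the sum defining $\psi_1(z+a)$, combined with the $\FF_q$-linearity of $\chi_t$ and the logarithmic-derivative identity $\sum_{a\in A}(z-a)^{-1} = \pitilde/\eC(z)$ (coming from the product expansion of $\eC$), yields $\psi_1(z+a) = \psi_1(z) + \chi_t(a)\pitilde/\eC(z)$; multiplying through by $\eC(z+a) = \eC(z)$ shows that $\eC\cdot\psi_1$ shifts by the constant $\pitilde\chi_t(a)$ under $z \mapsto z+a$. On the other hand, additivity of $\eC$ in $z$ gives $f_t(z+a) - f_t(z) = f_t(a) = \chi_t(a)\omega(t)$, producing the same shift for $\pitilde f_t$. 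Since $F$ is $A$-periodic and vanishes on the simple zeros of $\eC$, the quotient $F/\eC$ is an entire, $A$-periodic $\TT$-valued function of $z$; by surjectivity of $\eC:\CC_\infty \to \CC_\infty$ with kernel $A$, one writes $F/\eC = \Phi\circ\eC$ for a unique entire $\Phi:\CC_\infty \to \TT$.

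To conclude $\Phi \equiv 0$, I would apply the ultrametric maximum principle: an entire function on $\CC_\infty$ vanishing at infinity is identically zero, since its Gauss norm $|\Phi|_R$ is non-decreasing in $R$ and equals $\max_n |\phi_n| R^n$ for the Taylor coefficients. Direct Gauss-norm estimates using $|\psi_1(z)| \leq 1/d(z,A)$ together with a Newton-polygon comparison of the series expansions $\eC(z) = \sum_{i\geq 0}\pitilde^{q^i}z^{q^i}/d_i$ and $f_t(z) = \sum_{i\geq 0}\pitilde^{q^i}z^{q^i}/((\theta^{q^i}-t)d_i)$ (which differ term-by-term by factors of norm $q^{-q^i}$) show that $|F(z)/\eC(z)| \to 0$ as $|z| \to \infty$ along sequences avoiding the critical spheres $|z| \in q^{\NN}$. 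By $A$-periodicity and surjectivity of $\eC$, this translates into $|\Phi(w)| \to 0$ uniformly as $|w| \to \infty$, whence $\Phi \equiv 0$, proving the theorem.

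The main obstacle is the Newton-polygon comparison: because $\eC$ and $f_t$ have parallel shapes, one must track dominant-term slopes precisely to verify uniform super-exponential decay of $|f_t(z)/\eC(z)|$ as $|z|$ grows, avoiding the critical spheres where several terms may dominate simultaneously. Once this growth estimate is secured, the remaining steps---entirety of $F$, $A$-periodicity, and the descent through $\eC$---follow from routine calculations.
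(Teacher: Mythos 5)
Most of your argument is sound, and its mechanism is close to the paper's: your function $F/\eC$ is, up to the factor $\omega(t)$, exactly the term $h_\emptyset(z)=\sum_{a\in A}\chi_t(a-z)/(z-a)$ that the paper isolates in the proof of Theorem \ref{pellarinperkins} (for $s=1$ the telescoping $\chi_t(a)=\chi_t(a-z)+\chi_t(z)$ gives $\psi_1=\pitilde u\chi_t+h_\emptyset$ at once), and the paper then proves Theorem \ref{thrudy1} by showing $h_\emptyset$ is a constant (Proposition \ref{temperedper}) which dies in the limit $|z|_\Im\to\infty$, finishing with \eqref{papdiffeq} exactly as you propose. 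Your verifications that $F$ is entire, vanishes on $A$ (via $f_t(a)=\chi_t(a)\omega(t)$, which is the interpolation property the paper quotes from \cite[Th.\ 2.19]{PER}; your ``directly from the series'' sketch is repaired by the relation $f_t(\theta z)=\eC(z)+tf_t(z)$ together with $f_t(1)=\omega$), and is $A$-periodic are all correct, as is the decay of $\psi_1$ (note this uses $\|\chi_t(a)\|=1$ since $\chi_t(a)\in\FF_q[t]$).

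The genuine gap is the assertion that $F/\eC=\Phi\circ\eC$ for a unique \emph{entire} $\Phi$, which you justify only by surjectivity of $\eC$ with kernel $A$. Surjectivity yields a set-theoretic $\Phi$; that it is given by a single everywhere-convergent power series is a real theorem, not formal: in the archimedean analogue the statement is false ($\sin z$ is periodic and entire, but as a function of $w=e^{iz}$ it is $(w-w^{-1})/2i$, holomorphic only on $\CC^\times$). Proving it here requires the specific nonarchimedean structure, e.g.\ Weierstrass factorization of $\eC$ on the disks $\{|z|\le q^n\}$ as a unit times $\prod_{a\in A,\ \deg a\le n}(z-a)$, realizing a finite Galois cover with deck group the translations by polynomials of degree $\le n$, so that invariant analytic functions descend analytically --- a lemma comparable in weight to the paper's Proposition \ref{temperedper}, which you would need to supply. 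Relatedly, your growth analysis conflates $|z|\to\infty$ with $|z|_\Im\to\infty$: it happens that $|z|\notin q^{\ZZ}$ forces $|z|_\Im=|z|$, so your ``avoiding critical spheres'' regime survives, but you never note this, and near $K_\infty$ (e.g.\ $z\in A$ of huge modulus) the termwise Newton-polygon dominance fails completely and only periodicity saves you; moreover passing from pointwise bounds at points $w=\eC(z)$ to the sphere-supremum (Gauss norm) bounds needed for ``$\Phi\equiv 0$'' is left implicit. The clean repair is to drop $\Phi$ entirely: your $F/\eC$ is entire, $A$-periodic, and bounded on $\CC_\infty$ (for $|z|_\Im\ge 1$ by your decay estimates together with Lemma \ref{lemmaQ2}, and otherwise by periodicity via Lemma \ref{lemmaQ1}), hence constant by Proposition \ref{entire}, and the constant vanishes since the function tends to $0$ as $|z|_\Im\to\infty$ --- which is precisely the paper's argument.
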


As the title of this section suggests, the identity of the previous theorem gives a zeta function realization of the function $\bm{L}_\alpha$ akin to the identity
\begin{equation}\label{pelsid}
L(\chi_t,1) = \frac{\pitilde}{(\theta-t)\omega(t)},\end{equation}
discovered by the first author in \cite{PEL}, which gave a zeta function realization for the rigid analytic trivialization 
\[\Omega := \tau(\omega)^{-1} = \frac{1}{(t-\theta)\omega}\] 
of the dual Carlitz motive. 
%Further, \eqref{identityrudy} provides a curious intertwining of objects coming from both Anderson's theory of $t$-motives and of $t$-modules. This leads us to suspect that a similar phenomenon may exist for the functions $\psi_s$, and though we do not investigate these questions here, we hope to return to them at a future time.

\subsection{Main Results}

\subsubsection{Functional identities for $\psi_s$ with $s$ arbitrary}
Without the restriction on $s$ necessary in \cite{PER}, we shall prove a qualitative generalization of the explicit identities of the second author \cite[Theorem 1.1]{PER}, and, in particular, Theorem \ref{thrudy1}; see \S \ref{proofofthe1}. Clearly, the following result may also be stated in terms of Papanikolas' functions, but we leave the explicit details to the reader.

To follow, any empty product shall be understood to equal $1$, and for a finite set $I$, we denote its cardinality by $|I|$. We will prove the next result in \S \ref{pfthm2}.

\begin{Theorem}\label{pellarinperkins} Let $s$ be a positive integer.
For each subset $I \subset \Sigma_{s} = \{1,2,\dots,s\}$, $I\neq\Sigma_{s}$, there exists a polynomial $g_I \in K(t_1,\dots,t_s)[X]$ of degree at most $\frac{|\Sigma_s \setminus I|}{q}$ in $X$, 
such that $g_I(e_C)\prod_{j \in \Sigma_s \setminus I} \omega(t_j)^{-1} \in \EE_s[\eC]$ and, for all $z \in \CC_\infty$,
\[ \frac{1}{\pitilde}{\omega(t_1) \cdots \omega(t_s) \eC(z) \psi_s(z)} = \prod_{i \in \Sigma_{s}} f_{t_i}(z) + \eC(z)\sum_{I \subsetneq \Sigma_{s}} g_I(\eC(z)) \prod_{i \in I} f_{t_i}(z).\]
\end{Theorem}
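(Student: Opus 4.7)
The plan is to reduce the claimed identity to a divisibility statement by $\eC(z)$, then exploit the $\tau$-difference formalism to structure the quotient. Denote the left-hand side by $\Psi_s(z) := \pitilde^{-1}\prod_{i=1}^s \omega(t_i)\eC(z)\psi_s(z)$, and the candidate right-hand side by $\Phi_s(z)$ with the $g_I$'s to be determined. The base case $s = 1$ is Theorem \ref{thrudy1} rewritten as $\Psi_1(z) = f_{t_1}(z)$, i.e., $g_\emptyset = 0$.

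\textbf{Step 1 (Evaluation and divisibility).} I first prove by induction on $\deg a$ the auxiliary identity $f_t(a) = \omega(t)\chi_t(a)$ for every $a \in A$. The base case $f_t(1) = \omega(t)$ is recalled in the excerpt; the inductive step uses the recursion $f_t(\theta z) = \eC(z) + t f_t(z)$, which is immediate from the series definition of $f_t$, combined with $\eC(a) = 0$ for $a \in A$ and the $\FF_q$-linearity of $f_t$ in $z$. Next, for $z = a \in A \setminus \{0\}$, the Carlitz product expansion $\eC(z) = \pitilde z\prod_{b \in A \setminus\{0\}}(1 - z/b)$ yields $\lim_{z \to a}\eC(z)/(z-a) = \pitilde$, so pole cancellation in $\eC(z)\psi_s(z)$ gives $\Psi_s(a) = \pitilde^{-1}\prod_i\omega(t_i)\cdot\pitilde\prod_i\chi_{t_i}(a) = \prod_i\omega(t_i)\chi_{t_i}(a) = \prod_i f_{t_i}(a)$ by the auxiliary identity. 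Both sides vanish at $z = 0$. Hence $\Psi_s(z) - \prod_i f_{t_i}(z)$ is entire in $z$ and vanishes on all of $A$. Since $\eC$ has simple zeros exactly on $A$, the quotient $H_s(z) := (\Psi_s(z) - \prod_i f_{t_i}(z))/\eC(z)$ is an entire function of $z$.

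\textbf{Step 2 ($\tau$-matching and main obstacle).} It remains to show $H_s(z) = \sum_{I \subsetneq \Sigma_s}g_I(\eC(z))\prod_{i \in I}f_{t_i}(z)$ with $\deg_X g_I \leq |\Sigma_s \setminus I|/q$. The strategy is to apply $\tau$ to both sides of the desired equation and match coefficients in the filtration $\{\eC^k\prod_{i \in I}f_{t_i}\}$. Using $\tau(\omega(t_i)) = (t_i - \theta)\omega(t_i)$ and $\tau(f_{t_i}) = \eC + (t_i - \theta)f_{t_i}$, the leading expansion $\tau(\prod_i f_{t_i}) = \sum_{J \subseteq \Sigma_s}\eC^{s-|J|}\prod_{j \in J}(t_j - \theta)f_{t_j}$ (and its analogue for the $g_I$-terms) allows one to compare $\tau(\Psi_s)$ and $\tau(\Phi_s)$ sector by sector indexed by $J$, yielding a triangular system determining the $g_I$'s recursively by descending on $|I|$. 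The bound $\deg_X g_I \leq |\Sigma_s \setminus I|/q$ arises because $\tau$ sends $\eC \mapsto \eC^q$, so a term of $X$-degree $d$ in $g_I$ contributes $\eC$-powers up to $qd$ after $\tau$, which must match the maximum $\eC$-degree $s - |I|$ in sector $I$. Finally, the entireness condition $g_I(\eC)\prod_{j \in \Sigma_s \setminus I}\omega(t_j)^{-1} \in \EE_s[\eC]$ is verified by matching residues at $t_j = \theta^{q^n}$ for $j \notin I$: the pole of $\omega(t_j)$ on the LHS (with residues obeying $\omega_{n-1}^q = \omega_n(\theta^{q^n}-\theta)$ coming from $\tau(\omega) = (t-\theta)\omega$) must be absorbed by $g_I$ on the RHS, since $f_{t_j}$-factors for $j \notin I$ are absent. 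The hardest step is the consistency of this $\tau$-recursion: one must compute $\tau(\eC\psi_s)$ in terms of the generators $\{\eC, f_{t_i}\}$, which is not immediate from the definition of $\psi_s$ (where $\tau$ replaces $(z-a)^{-1}$ by $(z-a)^{-q}$ rather than effecting a clean shift), and one must show the resulting decomposition of $H_s$ is unambiguous modulo the $\tau$-relations $\tau(f_{t_i}) - (t_i-\theta)f_{t_i} = \eC$.
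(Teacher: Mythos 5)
Your Step 1 is sound: the recursion $f_t(\theta z)=\eC(z)+tf_t(z)$ does follow from the series expansion, the interpolation $f_t(a)=\omega(t)\chi_t(a)$ for $a\in A$ is correct (it is the paper's Lemma \ref{chilem}, via $\chi_t(z)=\omega(t)^{-1}f_t(z)$), and the residue computation $\lim_{z\to a}\eC(z)/(z-a)=\pitilde$ correctly shows that $\Psi_s-\prod_i f_{t_i}$ is entire and vanishes on $A$, so that $H_s=(\Psi_s-\prod_i f_{t_i})/\eC$ is entire. The genuine gap is your Step 2, which is where the whole content of the theorem lies, and which you yourself flag without resolving: there is no computation of $\tau(\eC\psi_s)$ in terms of $\eC,f_{t_1},\dots,f_{t_s}$, and in fact no such clean $\tau$-difference equation for $\psi_s$ is available \emph{a priori}. (The relation $\tau(\psi_1)=(\pitilde u)^{q-1}(\psi_1-L(\chi_t,1))$ appearing in the paper's closing example is \emph{derived from} the identity $\psi_1=\pitilde u\chi_t$, i.e., from the theorem itself, so it cannot seed your recursion; for $s\geq 2$ no analogue is known.) Moreover, even if you had a $\tau$-equation, "matching coefficients sector by sector" cannot determine the entire function $H_s$: a twisted difference equation has a large solution space among entire functions, and your argument contains no analytic input (growth control in $|z|_\Im$, or a Liouville-type statement) that would force $H_s$ to lie in the finitely generated module spanned by $\{\eC^k\prod_{i\in I}f_{t_i}\}$. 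Your degree bound $qd\leq s-|I|$ is thus a heuristic hanging on an unproven decomposition.

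The paper closes exactly this gap by a different mechanism. Instead of dividing by $\eC$ and trying to identify the quotient afterwards, it decomposes $\psi_s$ \emph{exactly}, using the additivity $\chi_{t_i}(a)=\chi_{t_i}(a-z)+\chi_{t_i}(z)$ of the entire interpolation, into $\psi_s=\pitilde u\,\chi^{\Sigma_s}+\sum_{I\subsetneq\Sigma_s}\chi^I h_I$ with $h_I(z)=\sum_{a\in A}\chi^{I^c}(a-z)/(z-a)$, where each $h_I$ is \emph{visibly} $A$-periodic and (by Proposition \ref{uniformclosedness}) $\EE_s$-entire. The analytic input you are missing is then supplied by the bound $\|\chi_t(z)\|\leq\max\{1,|\eC(z)|^{1/q}\}$ of Lemma \ref{chilem}, which makes $h_I$ tempered with exponent $|I^c|/q$, and by the ultrametric Liouville-type Proposition \ref{temperedper}, which says that an $A$-periodic tempered entire function is a polynomial in $\eC$ of the corresponding degree; this is precisely where the bound $\deg_X g_I\leq|I^c|/q$ comes from. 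Finally, your proposal never addresses why the coefficients of $g_I$ lie in $K(t_1,\dots,t_s)$ (your residue-matching at $t_j=\theta^{q^n}$ only concerns entireness after dividing by the $\omega(t_j)$'s): the paper obtains this rationality by combining Lemma \ref{Lgenlem} (the expansion of $z\psi_s(z)$ in powers of $\pitilde z$ has coefficients in $K(t_1,\dots,t_s)$ up to the factor $\omega(t_1)\cdots\omega(t_s)$, via the functional identities of Angl\`es--Pellarin), the algebraic independence of $\eC,f_{t_1},\dots,f_{t_s}$ (Lemma \ref{algebraicindependence}), and the elementary descent statement Proposition \ref{elementary}. Without these three ingredients, or substitutes for them, your outline does not yield the theorem.
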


We are unable to explicitly describe the polynomials $g_I$ guaranteed by Theorem \ref{pellarinperkins} above; we do however give an explicit description of the {\it functions} $g_I(\eC)$ in the proof of Theorem \ref{pellarinperkins} below. 
In contrast with the explicit results of the second author \cite[Theorem 1.1]{PER}, we are led to pose the following question.
\begin{Question}
Are the polynomials $g_I(X)$ of Theorem \ref{pellarinperkins} actually elements of \newline $ K(t_1,\dots,t_s)$?
\end{Question}

The entireness, in all variables, of the function $g_I(e_C)/\prod_{j \in \Sigma_s \setminus I} \omega(t_j) \in \EE_s[\eC]$ tells us that the poles of the coefficients of $g_I(X)$ can only occur at $t_j = \theta^{q^i}$, for $j \in \Sigma_s \setminus I$ and $i \geq 0$. Conceivably, with the methods of \cite{ANG&PEL1}, we can give an upper bound on the powers of $\theta$ which can occur, but we do not pursue such questions in this article.

Further, we do not yet know if there is a connection of the identity of Theorem \ref{pellarinperkins} above with the 
theory of $t$-motives. Rather, our primary interest in proving the theorem above is that it demonstrates how the functional identities of \cite[Theorem 1]{ANG&PEL1} assemble in families when $s$ is fixed and $\alpha$ varies in the class of $s \pmod{q-1}$; see Lemma \ref{Lgenlem}. 
We shall also discuss the growth of the functions $\psi_s$ and $f_t$ as $|z|_{\Im} := \inf_{\kappa \in K_\infty}|z-\kappa| \rightarrow \infty$; clearly, $\psi_s$ tends toward zero, and by Remark \ref{chigrwthinfty}, $f_t$ is unbounded. Thus the identity of Theorem \ref{pellarinperkins} above demonstrates a rich interplay between the functions $\eC, f_{t_1},\dots,f_{t_s}$ when $|z|_\Im$ is large.

Finally, we must mention a paper of Gekeler \cite{Gekcrelles}, wherein he uses functions akin to $\psi_1$ to establish surjectivity of the de Rham morphism for the cohomology of Drinfeld modules. Given a positive integer $h$, a lattice $\Lambda \subseteq \CC_\infty$ with exponential function $e_\Lambda$, and an $A$-linear function $\mathcal{X}: \Lambda \rightarrow \CC_\infty$, Gekeler considers the {\it quasi-periodic functions}
\[F_{h,\mathcal{X}}(z) := \sum_{\lambda \in \Lambda} \mathcal{X}(\lambda) (e_\Lambda(z)/(z - \lambda))^{q^h},\]
which can be shown to converge uniformly for all $z \in \CC_\infty$ to $\FF_q$-linear functions. Further, they interpolate the values of $\mathcal{X}$ on $\Lambda$ and satisfy, for each $a \in A$, the functional identities
\[F_{h,\mathcal{X}}(az) - aF_{h,\mathcal{X}}(z) = \eta_a(e_\Lambda(z))\]
for some $\eta_a$ in the ring of twisted polynomials $\tau \CC_\infty\{\tau\}$, giving the functions $A$-linearity upon restriction of $z$ to $\Lambda$. Thus, perhaps Theorem \ref{pellarinperkins} above suggests an extension of Gekeler's results for the multilinear pointwise products $z \mapsto \mathcal{X}_1(z) \mathcal{X}_2(z) \cdots \mathcal{X}_s(z)$ of the $A$-linear functions $\mathcal{X}_1, \mathcal{X}_2, \dots, \mathcal{X}_s : \Lambda \rightarrow \CC_\infty$. We hope to return to this question in the future.

\subsubsection{Evaluation at roots of unity} \label{evalmap}

Another basic feature of these multivariate $L$-series, and hence the functions $\psi_s$, is the possibility of evaluating the variables $t_1, \dots, t_s$ at roots of unity; see $\eg$ \cite{ANG&PEL1,ANG&PEL2,APT,PEL}.
As a highlight of such considerations, detailed below, we recover the following connection, due originally to Angl\`es and the first author \cite{ANG&PEL2}, between the Anderson-Thakur function $\omega$ and Thakur's Gauss sums, from \cite{THA},
\[g(\chi) := \sum_{a \in (A / \pfrak A)^\times} \chi(a)^{-1} \eC(a/\pfrak),\]
where $\chi(a)$ denotes the image of $a$ under the $\FF_q$-algebra map $\chi$ determined by $\theta \mapsto \zeta$ for some fixed root $\zeta$ of $\pfrak$. The Gauss-Thakur sum $g(\chi)$ is non-zero for all such characters $\chi$.

Letting $\ell_0:=1$ and $\ell_i:=(\theta-\theta^{q^i})\ell_{i-1} \in A$, for all positive integers $i$, we have the following result whose proof appears in \S \ref{Mpolyprops} below.

\begin{Theorem} \label{angpelomeg}
For all $\zeta \in \FF_q^{ac}$ with minimal polynomial in $A$ of degree $d$, we have
 \[\omega|_{t = \zeta} = -\chi(\ell_{d-1}) g(\chi).\]
\end{Theorem}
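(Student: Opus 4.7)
My plan is to prove the identity by a direct term-by-term matching of $\omega(\zeta)$ and $-\chi(\ell_{d-1})g(\chi)$ in parallel series expansions. On the left, combining the identity $\omega(t)=f_t(1)$ with the excerpt's closed form for $f_t$ gives
\[\omega(\zeta)=\sum_{n\geq 0}\frac{\pitilde^{q^n}}{(\theta^{q^n}-\zeta)\,d_n}.\]
On the right, substituting $e_C(a/\pfrak)=\sum_n(\pitilde a/\pfrak)^{q^n}/d_n$ into the definition of $g(\chi)$ and interchanging the order of summation yields
\[g(\chi)=\sum_{n\geq 0}\frac{\pitilde^{q^n}}{d_n\,\pfrak^{q^n}}\,S_n,\qquad S_n:=\sum_{a\in(A/\pfrak A)^\times}\chi(a)^{-1}a^{q^n}.\]
Stripping the common prefactor $\pitilde^{q^n}/d_n$, the theorem reduces to the per-term identity
\[\frac{\chi(\ell_{d-1})\,S_n}{\pfrak^{q^n}}=-\frac{1}{\theta^{q^n}-\zeta}\qquad (n\geq 0).\]

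The heart of the plan is the explicit evaluation of $S_n$. Choosing representatives $a$ of degree less than $d$, the identity $a^{q^n}=a(\theta^{q^n})$ (valid since $a\in\FF_q[\theta]$) shows that $S_n$ is a polynomial in $u:=\theta^{q^n}$ of degree less than $d$ with coefficients in $\FF_{q^d}$. I would reparametrize the sum by $c=\chi(a)\in\FF_{q^d}^\times$, decompose $c=\sum_i c_i\zeta^i$ using the trace-dual basis $\{\alpha_i\}$ of $\{\zeta^i\}_{0\leq i<d}$ (so that $c_i=\operatorname{Tr}_{\FF_{q^d}/\FF_q}(c\alpha_i)$), and invoke the elementary character-sum vanishing $\sum_{c\in\FF_{q^d}^\times}c^{q^j-1}=-\delta_{j,0}$ for $0\leq j<d$ to collapse the double sum to $S_n=-\sum_{i=0}^{d-1}\alpha_i u^i$. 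Applying the classical Lagrange-interpolation formula for the trace-dual basis,
\[\sum_{i=0}^{d-1}\alpha_i X^i=\frac{\pfrak(X)}{(X-\zeta)\,\pfrak'(\zeta)},\]
together with $\pfrak'(\zeta)=\prod_{i=1}^{d-1}(\zeta-\zeta^{q^i})=\chi(\ell_{d-1})$ and the algebraic identity $\pfrak^{q^n}=\pfrak(\theta^{q^n})$ (which holds because $\pfrak\in\FF_q[\theta]$), then yields exactly the per-term identity above.

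The main obstacle I anticipate is the explicit identification of $S_n$ via the trace-dual-basis formula; this requires the character-sum vanishing and the classical Lagrange-interpolation expression for the trace-dual basis, both standard in finite-field theory but needing careful bookkeeping to combine with the Frobenius symmetry $\pfrak^{q^n}=\pfrak(\theta^{q^n})$. Once this identification is in place, the cancellation between the $n$-th summands of $\omega(\zeta)$ and $-\chi(\ell_{d-1})g(\chi)$ is immediate, and the theorem follows.
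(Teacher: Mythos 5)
Your proof is correct, but it takes a genuinely different route from the paper's. The paper obtains the theorem as a byproduct of its interpolation machinery: it compares the coefficients of $z$ on the two sides of Corollary \ref{chievalcor}, $\ev_\pfrak(\chi_t(z)) = \pfrak^{-1}M_\pfrak(\eC(z/\pfrak))$ --- which rests on the identity $\psi_1 = \pitilde u \chi_t$ of Theorem \ref{thrudy1} (proved via the ultrametric maximum principle) and on Proposition \ref{evchiprop} --- together with the explicit coefficient of $Z$ in the Lagrange polynomial $M_\pfrak$, computed in Corollary \ref{exactcoeffs} by means of Lemma \ref{propdegree}, Carlitz's formulas from \cite{Car35}, and the telescoping Lemma \ref{telescope}. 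You instead match the $n$-th terms of the two series directly, concentrating all the content in the twisted power sums $S_n$, and every step checks out: the interchange of summation is over the finite set of nonzero $a$ of degree $<d$, so harmless; the orthogonality $\sum_{c\in\FF_{q^d}^\times}c^{q^j-1}=-\delta_{j,0}$ for $0\leq j<d$ holds since $0<q^j-1<q^d-1$ when $0<j<d$ and $q^d-1\equiv-1$ in characteristic $p$; the dual-basis identity $\sum_i\alpha_iX^i=\pfrak(X)/\bigl((X-\zeta)\pfrak'(\zeta)\bigr)$ is the standard consequence of Lagrange interpolation of $X^j$ at the conjugates of $\zeta$; and $\pfrak'(\zeta)=\prod_{j=1}^{d-1}(\zeta-\zeta^{q^j})=\chi(\ell_{d-1})$ agrees with the paper's recursion $\ell_j=(\theta-\theta^{q^j})\ell_{j-1}$. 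Since you prove the per-term identity outright, equality of the sums follows; the only residual justification is termwise evaluation of $\omega\in\TT$ at $t=\zeta$, which is the continuity of $\ev_\zeta$ on $\TT$ for $|\zeta|\leq 1$. It is worth noting that your trace-duality computation of $S_n$ is an elementary substitute for precisely the ingredients of Corollary \ref{exactcoeffs}: after reducing the sum to monic representatives (which contributes the factor $q-1\equiv-1$), your closed form for $S_n$ is equivalent to Carlitz's identity $\sum_{a\in A^+(j)}a(y)/a(x)=\ell_j(x)^{-1}\prod_{k=0}^{j-1}(y-x^{q^k})$ summed over $j$ via Lemma \ref{telescope}. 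What each approach buys: yours is short and self-contained, bypassing $\psi_1$, the Banach-algebra and maximum-principle apparatus, and the polynomial $M_\pfrak$ altogether; the paper's heavier route yields, at the same stroke, the full interpolation identity of Corollary \ref{chievalcor} for all $z$ and the coefficient of $Z^{|\pfrak|/q}$ in $M_\pfrak$ needed for Theorem \ref{degcoeffthm}, neither of which your argument recovers.
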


It is worth noting that the sign of $g(\chi)$ is quickly obtained from the previous theorem, and the absolute value of $g(\chi)$ is readily seen from the definition above and will be clear from Cor. \ref{exactcoeffs} below. See \cite{THAsign} where such questions are handled in full generality.

In the case of more indeterminates, we let $\mathfrak{p}_1,\ldots,\mathfrak{p}_s$ be primes (that is, irreducible monic polynomials) of
$A$; we also set $\mfrak=\mathfrak{p}_1\cdots \mathfrak{p}_s$. Let us choose, for all $i=1,\ldots,s$, a root $\zeta_i$ of $\mathfrak{p}_i$ in $\FF_q^{ac}$, the algebraic closure of $\FF_q$ in $\CC_\infty$.
Let $$\ev_\mfrak:\TT_s\rightarrow\CC_\infty$$ be the evaluation map
sending a formal series $\sum_{i_1,\ldots,i_s}c_{i_1,\ldots,i_s}t_{1}^{i_1}\cdots t_{s}^{i_s}\in\TT_s$ to $\sum_{i_1,\ldots,i_s}c_{i_1,\ldots,i_s}\zeta_{1}^{i_1}\cdots \zeta_{s}^{i_s}$. We emphasize the dependence on $\zeta_1,\ldots,\zeta_s$, rather than on $\pfrak_1, \dots, \pfrak_s$.

One easily checks that, for each $a \in A$, the functions $\psi_s$ satisfy the following simple transformation rule,
\[\psi_s(z+a) = \sum_{I \subseteq \Sigma_s} \sum_{b \in A} (z-b)^{-1} \prod_{i \in I} \chi_{t_i}(b) \prod_{j \in \Sigma_s \setminus I} \chi_{t_j}(a).\]
Now, while the $\psi_s$ themselves are not $A$-periodic, for any such evaluation map $\ev_\mfrak$, we have
\[\ev_\mfrak(\psi_s(z+\mfrak a))=\ev_\mfrak(\psi_s(z)),\quad \text{ for all } a\in A.\]
Furthermore, letting
\begin{equation} \label{umfrakdefid} u_\mfrak(z) := \frac{1}{\pitilde} \sum_{a \in A} \frac{1}{z-\mfrak a}=\frac{1}{\mfrak\eC(z/\mfrak)},\end{equation}
Proposition \ref{evchiprop} below gives
$$\pitilde^{-1}\ev_\mfrak(\psi_s)\in K^{ac}(u_\mfrak)\cap K^{ac}[[u_\mfrak]],$$
and convergence of the formal series holds for $z\in\CC_\infty$ such that $|z|_\Im$
is big enough. Explicitly computing the rational functions above
appears to be a difficult task. However, we can say something more when $s = 1$.

To state the final result we would like to highlight, we define, \emph{ad hoc}, $g(\chi^{-1}) := (-1)^d \pfrak / g(\chi)$, which is is the Gauss-Thakur sum for the character $\chi^{-1}$ and is an element of $A[\zeta,\eC(1/\pfrak)]$ by \cite[Prop. 15.2]{ANG&PEL1}. For more precision, in the case of a single prime $\pfrak$ with root $\zeta$ we now write $\ev_\zeta$ for the map $\ev_\pfrak$ described above. 

\begin{Theorem} \label{degcoeffthm}
When $|z|_\Im$ is sufficiently large, we have 
\[\pfrak \ev_\zeta(\psi_1/\pitilde) = u_\pfrak^{{|\pfrak|(q-1)}/{q}} \sum_{i \geq 0} a_i u_\pfrak^i \in A[\zeta,\eC(\pfrak^{-1})][[u_\pfrak]],\] 
with $a_0 = (-1)^{d+1}g(\chi^{-1})\chi(\ell_{d-1})^{-1}$.
\end{Theorem}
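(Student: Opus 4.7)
My plan is to combine Theorem \ref{pellarinperkins} for $s=1$---which in that case collapses to $\omega(t)\eC(z)\psi_1(z) = \pitilde f_t(z)$ since the only proper subset of $\Sigma_1$ is empty and the corresponding $g_\emptyset$ must vanish by the degree bound $1/q<1$---with Theorem \ref{angpelomeg}. Specializing at $t=\zeta$, substituting $\omega(\zeta) = -\chi(\ell_{d-1})g(\chi)$, and using $\pfrak/g(\chi) = (-1)^d g(\chi^{-1})$ to absorb the prime into the Gauss--Thakur sum, I obtain
\[ \pfrak\,\ev_\zeta(\psi_1/\pitilde) = \frac{(-1)^{d+1} g(\chi^{-1})}{\chi(\ell_{d-1})} \cdot \frac{f_\zeta(z)}{\eC(z)}. \]
The problem is thereby reduced to analyzing $f_\zeta(z)/\eC(z)$ as a power series in $u_\pfrak$: identifying its order of vanishing as $u_\pfrak\to 0$, extracting the leading coefficient, and verifying the claimed ring containment.

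The key structural observation is that both $f_\zeta$ and $\eC$ are $\pfrak A$-periodic in $z$. This is immediate for $\eC$, and for $f_\zeta$ it follows from the $\FF_q$-linearity of $f_t$ in $z$ together with the identity $f_t(a) = \chi_t(a)\omega(t)$ for $a\in A$ (established by induction on $\deg a$ from the functional equation $f_t(\theta z) = \eC(z) + tf_t(z)$, itself immediate from the series definition of $f_t$) and the vanishing $\chi(\pfrak)=0$. Writing $Y := \eC(z/\pfrak)$ (so that $u_\pfrak = 1/(\pfrak Y)$), both entire $\pfrak A$-periodic functions descend to polynomials in $Y$. The denominator is $\eC(z) = \phi_\pfrak(Y)$, of $Y$-degree $q^d$ with leading coefficient $1$. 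For the numerator, expanding $\pfrak = \sum_k p_k \theta^k$ with $p_d=1$ and iterating to get $f_\zeta(\theta^k w) = \zeta^k f_\zeta(w) + \sum_{j<k}\zeta^{k-1-j}\phi_{\theta^j}(Y)$, the $\pfrak$-weighted combination in which the $f_\zeta(w)$-term collapses to $\chi(\pfrak)f_\zeta(w)=0$ yields
\[ f_\zeta(z) = \sum_{j=0}^{d-1} Q_j(\zeta)\,\phi_{\theta^j}(Y) \in A[\zeta][Y], \qquad Q_{d-1}(\zeta) = p_d = 1, \]
where $Q_j(\zeta) := \sum_{k > j} p_k \zeta^{k-1-j}$; this is a polynomial in $Y$ of degree exactly $q^{d-1}$ with leading coefficient $1$.

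Consequently $f_\zeta(z)/\eC(z)$, viewed as a rational function of $Y$, vanishes at $Y=\infty$ to exact order $q^d - q^{d-1} = |\pfrak|(q-1)/q$, giving the claimed leading power of $u_\pfrak$. Reading off $a_0$ is then a matter of combining the prefactor with the leading term of the geometric expansion of $\phi_\pfrak(Y)^{-1}$ at $Y=\infty$; the membership $a_i \in A[\zeta,\eC(\pfrak^{-1})]$ follows because this expansion has coefficients in $A$, the numerator polynomial has coefficients in $A[\zeta]$, and the global Gauss--Thakur prefactor lies in $A[\zeta,\eC(\pfrak^{-1})]$ by \cite[Prop.~15.2]{ANG&PEL1}. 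The main obstacle I anticipate is the polynomial description of $f_\zeta$ in $Y$---once this is established, the remaining leading-term extraction and ring-theoretic bookkeeping are mechanical.
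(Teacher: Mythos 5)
Your strategy is sound and genuinely different from the paper's. The paper proves Theorem \ref{degcoeffthm} in one line from Proposition \ref{evchiprop} and Corollary \ref{exactcoeffs}: it writes $\pfrak\,\ev_\zeta(\psi_1/\pitilde)=M_\pfrak(Y)/C_\pfrak(Y)$ with $Y=\eC(z/\pfrak)$, where the Lagrange interpolation polynomial $M_\pfrak$ has $Z$-degree $|\pfrak|/q$, and its top coefficient is computed by expanding the Gauss--Thakur sum in powers of $\lambda=\eC(\pfrak^{-1})$ (Lemma \ref{propdegree}) together with Carlitz's explicit formulas and the telescoping Lemma \ref{telescope}. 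You bypass that entire computation: taking $\psi_1=\pitilde u\chi_t$ and Theorem \ref{angpelomeg} as inputs, your identity $f_\zeta(z)=\sum_{j=0}^{d-1}Q_j(\zeta)\,C_{\theta^j}(Y)$, derived from $f_t(\theta z)=\eC(z)+t f_t(z)$ and $\chi(\pfrak)=0$, is a clean closed form which, in view of Corollary \ref{chievalcor}, amounts to $M_\pfrak(Y)=\pfrak\,\omega(\zeta)^{-1}\sum_j Q_j(\zeta)C_{\theta^j}(Y)$; it exhibits the exact degree $q^{d-1}$ and the monic top term at a glance, and combined with Theorem \ref{angpelomeg} it recovers the top coefficient of Corollary \ref{exactcoeffs}. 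This is not circular, since the paper establishes Theorem \ref{angpelomeg} independently of Theorem \ref{degcoeffthm}, but note the logical trade: in the paper, Corollary \ref{exactcoeffs} is the common source of both theorems, whereas your argument consumes Theorem \ref{angpelomeg} as an input and so could not be recycled to reprove it.

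Two points need repair. First, the degree bound $1/q<1$ only forces $g_\emptyset$ to be a \emph{constant}, not to vanish; the vanishing requires the decay comparison in the paper's proof of Theorem \ref{thrudy1} ($\psi_1\to 0$ and $\pitilde u\chi_t\to 0$ as $|z|_\Im\to\infty$, while a nonzero constant survives in the limit). Simplest is to cite Theorem \ref{thrudy1} directly, since your displayed identity is exactly \eqref{identityrudy} combined with \eqref{papdiffeq}. Second, and more substantively, your final ``mechanical'' step conceals the normalization $Y^{-1}=\pfrak u_\pfrak$, \emph{not} $u_\pfrak$. Carrying it out honestly, $f_\zeta/\eC=Y^{-N}\bigl(1+O(Y^{-1})\bigr)$ with $N=|\pfrak|(q-1)/q$, so in the variable $u_\pfrak$ the leading coefficient is $(-1)^{d+1}g(\chi^{-1})\chi(\ell_{d-1})^{-1}\pfrak^{N}$; the stated $a_0$ is the leading coefficient with respect to $1/Y=\pfrak u_\pfrak$. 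A test case makes this concrete: for $q=2$, $\pfrak=\theta$, $\zeta=0$, one has $\lambda=\eC(1/\theta)=\theta$, $g(\chi)=\theta$, $g(\chi^{-1})=1$, $N=1$, and \eqref{evpsiuexpn} gives $\pfrak\,\ev_\zeta(\psi_1/\pitilde)=\theta u_\pfrak+\theta^3u_\pfrak^2+\cdots$, whose coefficient of $u_\pfrak^{N}$ is $\theta=\pfrak^{N}$ rather than $1$. The paper's own one-line proof makes the same silent identification when matching the top coefficient of $M_\pfrak$ to $a_0$, so you are in good company; but your write-up must either track the factor $\pfrak^{N}$ explicitly or restate the expansion in the variable $\pfrak u_\pfrak=\eC(z/\pfrak)^{-1}$ --- this is not bookkeeping that can be waved off, as it changes the literal statement being proved.
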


The proof of Theorem \ref{degcoeffthm} appears in \S \ref{Mpolyprops} below.

\subsection*{Acknowledgements}
Both authors wish to thank D. Goss for his continued interest in our project and helpful comments on previous versions of this work. We especially thank the anonymous referees for their careful reading and numerous helpful comments. It was their keen eyes which led us to discover a more satisfactory and correct proof of Theorem \ref{pellarinperkins}. 

\section{Entire functions with values in Banach algebras}\label{entirefunctions}

Let $\mathcal{R}$ be a $\CC_\infty$-Banach algebra with norm $|\cdot|_{\mathcal{R}}$ extending the norm $|\cdot|$ of $\CC_\infty$, i.e. such that for all $z \in \CC_\infty$ we have $|z\cdot 1|_\mathcal{R} = |z|$; see \cite[Chapter 13]{SCH} for basic definitions and results. We identify $\CC_\infty$ with a subalgebra of $\mathcal{R}$ via $\CC_\infty \cdot 1 \subset \mathcal{R}$. Throughout this section, we assume that $|\mathcal{R}|_{\mathcal{R}}=|\CC_\infty|=q^\QQ\cup\{0\}$
(\footnote{Where $q^\QQ=\{x\in\RR_{>0};x^n=q^m,\text{ for some }m,n\in\ZZ\setminus\{0\}\}$.}), that is, the set of norms
$|f|_{\mathcal{R}}$ for $f\in \mathcal{R}$ is equal to the set of norms $|x|$, $x\in\CC_\infty$.

\begin{Definition} 
A function
$f:\CC_\infty\rightarrow\mathcal{R}$
is called $\mathcal{R}$-\emph{entire} (or simply \emph{entire}) if, on every bounded subset $B$ of $\CC_\infty$,
$f$ can be obtained as a uniform limit of polynomial functions $f_i\in\mathcal{R}[z]$,
$f_i:B\rightarrow \mathcal{R}$.
\end{Definition} 

In particular, for such an $\mathcal{R}$-entire function $f$ and for all $z\in\CC_\infty$, we can write 
$$f(z)=\sum_{i\geq 0}c_iz^i,\quad c_i\in\mathcal{R},$$
and $\lim_{i\rightarrow \infty}|c_i|_{\mathcal{R}}^{1/i}=0$. The identification of $\CC_\infty$ with $\CC_\infty \cdot 1 \subset \mathcal{R}$ allows every $\CC_\infty$-entire function to be identified with an $\mathcal{R}$-entire function.

\begin{Remark}
Similarly, a natural notion of  a {\em $\mathcal{R}$-rigid analytic function} exists but will be marginal for the purposes of the present paper.
\end{Remark}

\subsection{$\mathcal{R}$-entire functions}

\begin{Proposition}\label{entire}
A bounded $\mathcal{R}$-entire function is constant.
\end{Proposition}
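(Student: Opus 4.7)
The plan is to prove the non-archimedean analog of Liouville's theorem. Since $f$ is $\mathcal{R}$-entire, write
\[f(z) = \sum_{i \geq 0} c_i z^i, \quad c_i \in \mathcal{R}, \quad |c_i|_{\mathcal{R}}^{1/i} \to 0,\]
as given in the discussion following the definition. The goal is to show $c_i = 0$ for every $i \geq 1$, which forces $f = c_0$.

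The key step is an ultrametric Cauchy-type estimate: for every $r \in |\CC_\infty^\times|$ and every $i \geq 0$,
\[|c_i|_{\mathcal{R}}\, r^i \;\leq\; \sup_{|z| \leq r} |f(z)|_{\mathcal{R}}.\]
I would prove this by first reducing to polynomials via the uniform approximation of $f$ on the disk $\{|z|\leq r\}$ by its partial sums $f_N = \sum_{i=0}^{N} c_i z^i \in \mathcal{R}[z]$, which is available by the definition of $\mathcal{R}$-entireness. For a polynomial $P = \sum_{i=0}^{N} c_i z^i$ with $c_i \in \mathcal{R}$, the standard Vandermonde/interpolation argument in rigid analysis works unchanged: pick $N+1$ points $z_0, \dots, z_N \in \CC_\infty$ with $|z_k| = r$ whose pairwise differences have norms as large as possible (for example, $z_k = r'\mu_k$ where $r' \in \CC_\infty$ has $|r'|=r$ and the $\mu_k$ are distinct Teichm\"uller-type representatives of residues in a sufficiently large extension of $\FF_q$, so that $|\mu_k - \mu_l| = 1$). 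Cramer's rule then expresses $c_i$ as a $\CC_\infty$-linear combination $c_i = \sum_k \lambda_{i,k} P(z_k)$ with $|\lambda_{i,k}| \leq r^{-i}$; the ultrametric inequality gives $|c_i|_{\mathcal{R}} \leq r^{-i} \max_k |P(z_k)|_{\mathcal{R}}$. Passing to the limit $N \to \infty$ yields the inequality displayed above.

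With the Cauchy estimate in hand, the rest is immediate: if $f$ is bounded, say $|f(z)|_{\mathcal{R}} \leq M$ for all $z \in \CC_\infty$, then $|c_i|_{\mathcal{R}} \leq M r^{-i}$ for every $r \in |\CC_\infty^\times|$. Since this set of values is unbounded, we let $r \to \infty$ and obtain $|c_i|_{\mathcal{R}} = 0$ for every $i \geq 1$. Thus $f(z) = c_0$ is constant, as claimed.

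The delicate point, and the one requiring the hypothesis $|\mathcal{R}|_{\mathcal{R}} = |\CC_\infty|$, is the choice of interpolation nodes producing well-controlled inverse-Vandermonde coefficients; the assumption ensures the value group is dense enough and permits picking points of any desired radius and well-separated residues in $\CC_\infty$ itself, so that the estimates on the $\lambda_{i,k}$ remain scalar (rather than inviting pathology from $\mathcal{R}$). Everything else is the formal transport of the classical Liouville argument into the non-archimedean Banach-algebra setting.
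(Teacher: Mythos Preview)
Your argument is correct and yields the result, but it differs from the paper's route. The paper follows Schikhof directly and proves the full maximum modulus \emph{equality}
\[
\sup_{|z|\leq r}|f(z)|_{\mathcal{R}}=\max_{n\geq 0}|c_n|_{\mathcal{R}}\,r^n,
\]
by normalizing so that $\max_n|c_n|_{\mathcal{R}}=1$ (this rescaling by a scalar in $\CC_\infty^\times$ is exactly where the hypothesis $|\mathcal{R}|_{\mathcal{R}}=|\CC_\infty|$ enters) and then exhibiting a point $x$ with $|f(x)|_{\mathcal{R}}$ close to $1$. Your Lagrange--Vandermonde interpolation instead proves only the one-sided Cauchy estimate $|c_i|_{\mathcal{R}}r^i\leq\sup_{|z|\leq r}|f(z)|_{\mathcal{R}}$, which is all Liouville needs; this is a clean and slightly more elementary shortcut, and the choice of nodes $z_k=r'\mu_k$ with $\mu_k\in\FF_q^{ac}$ (so that $|z_k-z_l|=r$) is exactly right.

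One point to correct: your final paragraph misidentifies the role of the hypothesis $|\mathcal{R}|_{\mathcal{R}}=|\CC_\infty|$. Your interpolation nodes and the coefficients $\lambda_{i,k}$ all live in $\CC_\infty$, whose value group is $q^{\QQ}$ unconditionally; nothing in your argument touches the value set of $\mathcal{R}$. In fact your proof goes through without that hypothesis, so you have actually established a mildly stronger statement than the paper does. The hypothesis is genuinely used in the paper's approach (for the rescaling step needed to get the \emph{equality} in the maximum principle), but not in yours.
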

\begin{proof}
We follow Schikhof's \cite[Theorems 42.2 and 42.6]{SCH}. We denote by $B_0(1)$ the
set of the $z\in\CC_\infty$ such that $|z|<1$. We note that if
$$f:B_0(1)\rightarrow\mathcal{R}$$ is defined by
$f(x)=\sum_{i\geq 0}a_ix^i$ for $a_i\in\mathcal{R}$, then
\begin{equation}\label{maxmod1}
\sup\{|f(x)|_{\mathcal{R}};|x|\leq 1\}=\max\{|a_n|_{\mathcal{R}};n\geq0\}\end{equation}
(cf. loc. cit. Lemma 42.1). Already, taking $x=1$, we see that
$\max\{|a_n|_{\mathcal{R}};n\geq0\}$ is well defined and equals the supremum
of the same set. We may assume that this maximum is one by rescaling $f$ by
an element $c\in\CC_\infty^\times$ (because of our assumption that
$|\mathcal{R}|_{\mathcal{R}}=|\CC_\infty|$).
We then have:
$$\sup\{|f(x)|_{\mathcal{R}};|x|< 1\}\leq \sup\{|f(x)|_{\mathcal{R}};|x|\leq1\}\leq \max\{|a_n|_{\mathcal{R}};n\geq0\}=1$$ and we need to show that we have equalities everywhere.
If $|a_0|_{\mathcal{R}}=1$, we have $|f(0)|_{\mathcal{R}}=1$ and we are
done. Otherwise, let $N$ be the smallest integer $j$ such that $|a_j|_{\mathcal{R}}=1$. We have that $N>0$. Let us choose $\epsilon>0$
such that $\epsilon<1-\max\{|a_0|_{\mathcal{R}},|a_1|_{\mathcal{R}},\ldots,|a_{N-1}|_{\mathcal{R}}\}$. Since $|\CC_\infty^\times|=q^\QQ$ is dense in the positive real numbers, there exists $x\in\CC_\infty^\times$ such that
$1-\epsilon<|x^N|<1$. Then, $|f(x)|_{\mathcal{R}}=|x^N|\geq 1-\epsilon$.

Now let $f$ be $\mathcal{R}$-entire and let us consider $r\in q^\QQ$.
We deduce from (\ref{maxmod1}) with $f$ replaced by $f(ax)$ with $|a|=r$ that
$$\sup\{|f(x)|_{\mathcal{R}};|x|\leq r\}=\max\{|a_n|_{\mathcal{R}}r^n;n\geq0\}.$$
If there exists $M\in\RR_{\geq0}$ such that $|f(x)|_{\mathcal{R}}\leq M$
for all $x\in\CC_\infty$, then, for all $r$ as above, $\max\{|a_n|_{\mathcal{R}}r^n;n\geq0\}\leq M$ which implies $a_1=a_2=\cdots=0$ and $f(x)=a_0$
for all $x\in\CC_\infty$. 
\end{proof}

We choose $r\in q^\QQ$. We denote by $B_0(r)$ the set $\{z\in\CC_\infty;|z|<r\}$. A function
$f:B_0(r)\rightarrow\mathcal{R}$ is said {\em $\mathcal{R}$-analytic (on $B_0(r)$)}
if there exist $c_0,c_1,\ldots\in\mathcal{R}$ such that $|c_i|_{\mathcal{R}}r^i\rightarrow\infty$
as $i\rightarrow\infty$, and such that, for all $z\in B_0(r)$, we have the equality
$f(z)=\sum_{i\geq 0}c_iz^i$.

\begin{Proposition}\label{uniformclosedness} 
Let $f_1,f_2,\ldots$ and $f$  be functions $B_0(r)\rightarrow\mathcal{R}$.
Let us assume that $f_1,f_2,\ldots$ are $\mathcal{R}$-analytic on $B_0(r)$ and that
$f=\lim_{i\rightarrow\infty}f_i$ uniformly on $B_0(r)$. Then, $f$ is $\mathcal{R}$-analytic 
on $B_0(r)$. In particular, if $f_1,f_2,\ldots$ and $f$ are functions
$\CC_\infty\rightarrow\mathcal{R}$ such that $f_1,f_2,\ldots$ are $\mathcal{R}$-entire and
such that, for all $r>0$, $f=\lim_{i\rightarrow\infty}f_i$ uniformly on $B_0(r)$,
then $f$ is $\mathcal{R}$-entire.
\end{Proposition}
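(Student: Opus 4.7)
The plan is to promote uniform convergence on $B_0(r)$ to convergence of the Taylor coefficients in $\mathcal{R}$ via the non-archimedean maximum modulus principle, and then assemble the limiting power series using completeness of $\mathcal{R}$. The key preliminary step is to extend identity (\ref{maxmod1}) from $B_0(1)$ to $B_0(r)$: since $r \in q^\QQ = |\CC_\infty^\times|$, pick $a \in \CC_\infty^\times$ with $|a|=r$ and apply (\ref{maxmod1}) to $x \mapsto g(ax)$ to obtain, for any $\mathcal{R}$-analytic $g(z)=\sum_i c_i z^i$ on $B_0(r)$, the Gauss-type equality
\[\sup_{z \in B_0(r)} |g(z)|_{\mathcal{R}} \;=\; \max_{i \geq 0} |c_i|_{\mathcal{R}}\,r^i.\]

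Writing $f_k(z) = \sum_i c_i^{(k)} z^i$, uniform convergence of $(f_k)$ on $B_0(r)$ is a uniform Cauchy condition, and the extended maximum principle applied to $f_k - f_j$ translates this into
\[\max_{i \geq 0} |c_i^{(k)} - c_i^{(j)}|_{\mathcal{R}}\, r^i \longrightarrow 0 \quad \text{as } k,j \rightarrow \infty.\]
Each sequence $(c_i^{(k)})_k$ is therefore Cauchy in the complete algebra $\mathcal{R}$ and converges to some $c_i \in \mathcal{R}$; more importantly, the convergence $c_i^{(k)} \to c_i$ is uniform in $i$ when weighted by $r^i$. From this uniformity I would extract the tail-decay $|c_i|_{\mathcal{R}} r^i \to 0$ by a standard $\varepsilon/2$-argument: fix $k$ so that $|c_i^{(k)} - c_i|_{\mathcal{R}} r^i < \varepsilon$ uniformly in $i$, then use $|c_i^{(k)}|_{\mathcal{R}} r^i \to 0$ for that single $k$. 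Thus $g(z):=\sum_i c_i z^i$ defines an $\mathcal{R}$-analytic function on $B_0(r)$, and the bound $|f_k(z) - g(z)|_{\mathcal{R}} \leq \max_i |c_i^{(k)} - c_i|_{\mathcal{R}} r^i$ together with the hypothesis $f_k \to f$ gives $f = g$, proving the first assertion.

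For the "in particular" clause, I would apply the first assertion on each ball $B_0(r)$ with $r \in q^\QQ$ (every bounded subset of $\CC_\infty$ is contained in such a ball) to produce local power-series representations of $f$; uniqueness of Taylor coefficients — compare two such representations on an overlapping smaller ball — forces the coefficients to come from a single sequence $(c_i)$ in $\mathcal{R}$ satisfying $|c_i|_{\mathcal{R}} r^i \to 0$ for every $r \in q^\QQ$, i.e.\ $|c_i|_{\mathcal{R}}^{1/i} \to 0$. The polynomial partial sums then approximate $f$ uniformly on every bounded subset, so $f$ is $\mathcal{R}$-entire. The one point requiring care — and the main obstacle I foresee — is the uniformity in $i$ of the weighted-coefficient Cauchy condition: without it there would be no way to transfer the tail-decay of $|c_i^{(k)}|_{\mathcal{R}} r^i$ from fixed $k$ to the limit coefficients $c_i$. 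The extended maximum principle supplies exactly this uniformity, and the rest is routine.
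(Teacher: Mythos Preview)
Your proof is correct and is precisely the adaptation of Schikhof's uniform closedness principle that the paper invokes; the paper itself gives no details beyond citing \cite[Theorem 42.2, (ii)]{SCH} and leaving the argument to the reader, so your write-up is in fact more complete than the original.
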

\begin{proof}
This follows from an easy adaptation of the uniform closedness principle \cite[Theorem 42.2, (ii)]{SCH}
that we leave to the reader. 
\end{proof}

\subsection{$A$-periodic $\mathcal{R}$-entire functions}

We need a few preliminary lemmas.

\begin{Lemma}\label{lemmaQ1}

Let $z$ be in $\CC_\infty$. Then, one of the following two properties holds.
\begin{enumerate}
\item There exists $a\in A$ such that $|z-a|<1$.
\item $|z|_{\Im}\geq1$.
\end{enumerate}

\end{Lemma}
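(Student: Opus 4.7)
The plan is to show the contrapositive/dichotomy directly: assume condition (1) fails, and deduce condition (2). So suppose $|z - a| \geq 1$ for every $a \in A$; the goal becomes to show $|z - \kappa| \geq 1$ for every $\kappa \in K_\infty$.

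The key observation is the decomposition of $K_\infty = \FF_q((1/\theta))$: any $\kappa \in K_\infty$ admits a unique splitting
\[
\kappa = p + r, \qquad p \in A, \ r \in (1/\theta)\FF_q[[1/\theta]],
\]
obtained by separating the Laurent series $\kappa = \sum_{i \leq N} c_i \theta^i$ into its ``polynomial part'' ($i \geq 0$) and its ``principal part at infinity'' ($i < 0$). By construction $|r| \leq q^{-1} < 1$.

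Now rewrite $z - \kappa = (z - p) - r$. Under the hypothesis that (1) fails, we have $|z - p| \geq 1 > |r|$, so the ultrametric (strong triangle) inequality yields the equality
\[
|z - \kappa| = \max\{|z - p|, |r|\} = |z - p| \geq 1.
\]
Taking the infimum over $\kappa \in K_\infty$ gives $|z|_\Im \geq 1$, which is condition (2).

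There is essentially no obstacle; the proof is a one-line application of the ultrametric inequality once the decomposition $K_\infty = A \oplus (1/\theta)\FF_q[[1/\theta]]$ is noted. The only thing worth double-checking is that the two cases need not be mutually exclusive (they are not — the lemma just asserts at least one holds), and that the infimum in the definition of $|z|_\Im$ is really over $K_\infty$ and not just $A$, which is why the decomposition step is needed rather than invoking the hypothesis directly.
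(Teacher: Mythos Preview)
Your proof is correct and follows essentially the same approach as the paper's: assume (1) fails, decompose an arbitrary $\kappa\in K_\infty$ as a polynomial part in $A$ plus a tail in $\frac{1}{\theta}\FF_q[[\frac{1}{\theta}]]$ of norm $<1$, and apply the ultrametric equality case to conclude $|z-\kappa|\geq 1$. The argument and its ingredients match the paper's proof line for line.
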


\begin{proof}
Assume that for all $a\in A$, $|z-a|\geq 1$, and let $\alpha$ be an element of $K_\infty$.
Then, we can write (uniquely) $\alpha=a+\beta$ with $a\in A$ and $\beta\in \frac{1}{\theta}\FF_q[[\frac{1}{\theta}]]$. We observe that $|\beta|<1$. Hence, $|\beta|\neq|z-a|$ and 
$$|z-\alpha|=|z-a-\beta|=\max\{|z-a|,|\beta|\}\geq1.$$ In particular, if the property 1 of the Lemma
does not hold, then, we have that $|z|_{\Im}\geq1$.
\end{proof}

\begin{Lemma}\label{lemmaQ2}

Let $z$ be an element of $\CC_\infty$.

\begin{enumerate}
\item If there exists $a\in A$ such that $|z-a|<1$, then
$|e_C(z)|<|z-a|q^{\frac{q}{q-1}}$.
\item There exists $\kappa_0>1$, independent of $z$, with the following property. If $|z|_{\Im}\geq 1$, 
then $q^{-\kappa_0}\leq |e_C(z)|\leq q^{-1}$.
\end{enumerate}

\end{Lemma}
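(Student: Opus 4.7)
The plan is to reduce both parts to ultrametric estimates, using two standard formulas for $\eC$: its defining series
\[\eC(w) = \sum_{i \geq 0}\frac{(\pitilde w)^{q^i}}{d_i},\]
and the partial-fraction identity
\[\frac{\pitilde}{\eC(z)} = \sum_{a \in A}\frac{1}{z-a},\]
which follows from taking the logarithmic derivative of the product expansion $\eC(z) = \pitilde z\prod_{0 \neq a \in A}(1 - z/a)$ together with $\eC'(z) = \pitilde$.

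For part (1), $A$-periodicity of $\eC$ allows me to set $w := z - a$ with $|w| < 1$, so that $\eC(z) = \eC(w)$. Using the standard valuations $|d_i| = q^{iq^i}$ and $|\pitilde| = q^{q/(q-1)}$, the $i$-th term in the series for $\eC(w)$ has norm $|w|^{q^i}q^{q^i(q/(q-1) - i)}$; an elementary calculation shows its ratio with the leading term $\pitilde w$ equals $|w|^{q-1}$ at $i = 1$ and is strictly smaller for $i \geq 2$. Since $|w| < 1$, the leading term strictly dominates, so the ultrametric inequality yields $|\eC(z)| = |\eC(w)| \leq |w|q^{q/(q-1)}$, as required.

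For part (2), combining the partial-fraction identity with Lemma \ref{lemmaQ1} and the hypothesis $|z|_\Im \geq 1$ forces $|z - a| \geq 1$ for every $a \in A$, so every summand $(z - a)^{-1}$ has norm at most one; this immediately produces one of the two bounds after inversion. For the matching bound, I would single out an $a_* \in A$ nearly realizing $|z - a_*| = |z|_\Im$ and exploit that $|a - a_*| \geq 1$ for $a \neq a_*$, together with the ultrametric applied to $z - a = (z - a_*) - (a - a_*)$, to conclude that the term at $a_*$ strictly dominates the others. The uniform constant $\kappa_0 > 1$ then emerges from this comparison, depending only on $q$ and $|\pitilde|$.

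The principal obstacle will be controlling ultrametric cancellation when several $|z - a|$ coincide with the minimum: in a non-archimedean setting, sums of equi-norm terms can drop sharply in absolute value, so to guarantee the precise bounds I expect to need either a combinatorial bound on the number of near-minimizing $a \in A$ or a reduction-mod-$\mfrak$ argument for a suitable $\mfrak \in A$. With such a bound in hand, the remainder of the proof is routine ultrametric bookkeeping.
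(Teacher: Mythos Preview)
Your treatment of part~(1) is correct and is in substance identical to the paper's: the paper simply invokes the Newton-polygon fact that $\exp_C$ is an isometry on $\{z' : |z'| < |\pitilde|\}$, and your term-by-term estimate is precisely the computation underlying that fact. Note that your argument actually yields the \emph{equality} $|\eC(w)| = |\pitilde w| = |w|\,q^{q/(q-1)}$ (since the leading term strictly dominates), which is sharper than the strict inequality printed in the statement.

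For part~(2) the approaches diverge. The paper gives no direct argument and simply cites Proposition~9 of Gekeler's CICMA lecture notes. Your partial-fraction route is thus a genuinely more self-contained alternative, and its easy half is entirely sound: from $|z-a| \geq 1$ for every $a \in A$ (Lemma~\ref{lemmaQ1}) one gets $\bigl|\pitilde/\eC(z)\bigr| = \bigl|\sum_{a}(z-a)^{-1}\bigr| \leq 1$, hence $|\eC(z)| \geq |\pitilde| = q^{q/(q-1)}$. A glance at the proof of Proposition~\ref{temperedper} shows that a uniform lower bound on $|\eC(z)|$ is the only ingredient from part~(2) that is ever used, and you have supplied it.

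The obstacle you flag for the ``matching bound'' is not a defect of your method but of the printed statement: an inequality $|\eC(z)| \leq q^{-1}$ cannot hold for all $z$ with $|z|_\Im \geq 1$, since $u(z) = \eC(z)^{-1} \to 0$ as $|z|_\Im \to \infty$; indeed your own bound already forces $|\eC(z)| > 1$. So no combinatorial or reduction argument will rescue that inequality, and you should not try. If one wants a two-sided estimate, the correct one is of the shape $|u(z)| \leq q^{-\kappa_0}$ (equivalently $|\eC(z)| \geq q^{\kappa_0}$), and your argument gives it with $\kappa_0 = q/(q-1)$.
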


\begin{proof}
If there exists $a\in A$ such that $|z-a|<1$, the conclusion is easily obtained
as it is well known from its Newton polygon that $\exp_C:\CC_\infty\rightarrow\CC_\infty$
induces an isometric $\FF_q$-automorphism of the disk $\{z'\in\CC_\infty;|z'|<|\widetilde{\pi}|\}$; recall that $|\widetilde{\pi}|=q^{\frac{q}{q-1}}$.

The second case of the lemma follows easily from \cite[Proposition 9]{GekelerCICMA}. 
\end{proof}

Let $\mathcal{P}^!(\mathcal{R})$ be the $\mathcal{R}$-algebra of
$\mathcal{R}$-entire functions which are $A$-periodic, that is, the $\mathcal{R}$-algebra of
$\mathcal{R}$-entire functions $f$ such that $f(z+a)=f(z)$ for all $z\in\CC_\infty$ and $a\in A$.

The following result will 
be used in the proof of Theorem \ref{pellarinperkins} below. We continue with the assumption 
$|\mathcal{R}|_{\mathcal{R}}=|\CC_\infty|$.

\begin{Proposition}\label{temperedper}
Let $f\in\mathcal{P}^!(\mathcal{R})$ be such that there exist an integer $N$ and a real number $C>0$ with
$|f(z)|_\mathcal{R}\leq C\max\{1,|\eC(z)|^N\}$ for all $z\in\CC_\infty$. 
Then, $f$ is a polynomial of $\mathcal{R}[\eC]$ of degree at most $N$.
\end{Proposition}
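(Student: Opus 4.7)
My plan is to subtract from $f$ a polynomial in $\eC$ of degree at most $N$ chosen so as to cancel the worst growth, obtaining a bounded $\mathcal{R}$-entire function that must vanish by Proposition \ref{entire}. The starting observation is that $\eC$ is locally invertible: combining the Newton-polygon argument behind Lemma \ref{lemmaQ2}(1) with the leading term $\eC(z)=\pitilde z + O(z^q)$, the map $\eC$ restricts to an isometric $\FF_q$-linear bijection between the open disks $\{|z|<1\}$ and $\{|w|< q^{q/(q-1)}\}$. Consequently, the Taylor expansion of $f$ at $z=0$ can be uniquely reorganized as a series in $\eC$:
\[
f(z)=\sum_{n\ge 0}a_n\,\eC(z)^n,\qquad a_n\in\mathcal{R},\quad|z|<1.
\]

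Next I would define
\[
h(z):=f(z)-\sum_{n=0}^{N}a_n\,\eC(z)^n,
\]
which is $\mathcal{R}$-entire and $A$-periodic (since $\eC$ is $A$-periodic), and satisfies $h(0)=0$. The plan reduces to showing that $h$ is bounded on $\CC_\infty$; Proposition \ref{entire} then forces $h$ to be constant and hence identically zero, yielding $f=\sum_{n=0}^{N}a_n\eC^n\in\mathcal{R}[\eC]$ with degree at most $N$.

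To show boundedness I would split $\CC_\infty$ via Lemma \ref{lemmaQ1}. On the region where $|z-a|<1$ for some $a\in A$, $A$-periodicity of both $h$ and $\eC$ reduces matters to $\{|z|<1\}$, on which $h(z)=\sum_{n>N}a_n\,\eC(z)^n$. To control this tail I would apply the $\mathcal{R}$-valued maximum modulus formula \eqref{maxmod1} (exactly as in the proof of Proposition \ref{entire}) to the power series $w\mapsto\sum_n a_n w^n$ on disks $\{|w|\le\rho\}$ with $\rho<q^{q/(q-1)}$, combined with the hypothesis $|f(z)|_{\mathcal{R}}\le C\max\{1,|\eC(z)|^N\}$ and the isometry $|\eC(z)|=|z|\,q^{q/(q-1)}$. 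Letting $\rho\to q^{q/(q-1)}$ from below, I expect to obtain
\[
|a_n|_{\mathcal{R}}\le C\,q^{(N-n)q/(q-1)}\qquad\text{for every }n\ge 0,
\]
which immediately forces $|a_n\,\eC(z)^n|_{\mathcal{R}}\le C\,q^{Nq/(q-1)}$ for each $n>N$ and every $|z|<1$, and hence bounds $h$ on this region.

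On the complementary region $\{|z|_{\Im}\ge 1\}$, Lemma \ref{lemmaQ2}(2) gives $|\eC(z)|\le q^{-1}$, so the hypothesis reads $|f(z)|_{\mathcal{R}}\le C$, while each term $|a_n\,\eC(z)^n|_{\mathcal{R}}\le|a_n|_{\mathcal{R}}$ is bounded by a finite maximum; this yields a uniform bound for $h$ on the whole region. Combining the two estimates, $h$ is a bounded $\mathcal{R}$-entire function, hence constant by Proposition \ref{entire}, and vanishing at $0$ gives $h\equiv 0$, as desired. The main obstacle I anticipate is a careful justification of the $\mathcal{R}$-valued maximum-modulus identity on the open disk $\{|z|<1\}$ (where $|\eC|$ only attains values strictly below $q^{q/(q-1)}$), which requires a limiting argument mirroring the closing step of the proof of Proposition \ref{entire}.
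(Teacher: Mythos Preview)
Your argument is correct and follows the same overall architecture as the paper's proof: subtract a degree-$N$ polynomial in $\eC$ matching the local expansion of $f$ at $0$, then use the dichotomy of Lemma~\ref{lemmaQ1} together with the $|\eC|$-estimates of Lemma~\ref{lemmaQ2} to show the remainder is bounded, and conclude via Proposition~\ref{entire}.

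The one substantive difference is in how the remainder is controlled on the region $\{|z-a|<1\}$. You keep $h=f-\sum_{n\le N}a_n\eC^n$ and bound the tail $\sum_{n>N}a_n\eC^n$ by first extracting the Cauchy-type coefficient estimate $|a_n|_{\mathcal{R}}\le C\,q^{(N-n)q/(q-1)}$ from the maximum-modulus identity on disks $\{|w|\le\rho\}$, $\rho\nearrow q^{q/(q-1)}$. The paper instead divides through by $\eC^N$, setting $g=(f-\sum_{i\le N}c_i\eC^i)/\eC^N$; since the $c_i$ are chosen so that the numerator vanishes to order $N+1$ at every $a\in A$, the quotient $g$ is $\mathcal{R}$-entire with $g(0)=0$, and its boundedness on $\{|z|<1\}$ is immediate from analyticity (no coefficient estimate needed). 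On $\{|z|_\Im\ge1\}$ the paper uses the \emph{lower} bound $|\eC(z)|\ge q^{-\kappa_0}$ from Lemma~\ref{lemmaQ2}(2) to control $|\eC|^{-N}$, whereas you only need the upper bound. Your route trades the division trick for an explicit Cauchy inequality; both are clean, but the paper's version sidesteps the limiting argument you flagged as the main obstacle.
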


\begin{proof} We recall that
the function $\eC:\CC_\infty\rightarrow\mathcal{R}$, is $\mathcal{R}$-entire,
$A$-periodic, and vanishes at $z=a$ with order one for all $a\in A$ and has no other zeroes. This implies that there exist elements $c_0,\ldots,c_{N}$ of $\mathcal{R}$ such that the function $g:\CC_\infty\rightarrow\mathcal{R}$
$$g=\frac{f-\sum_{i=0}^{N}c_i\eC^{i}}{\eC^{N}}$$
is $\mathcal{R}$-entire and $A$-periodic.

We set $C_1=\sup_{|z|<1}|g(z)|$. Let $z\in\CC_\infty$. If there exists 
$a\in A$ such that $|z-a|<1$, then $|g(z)|_{\mathcal{R}}<1$. Otherwise, 
Lemma \ref{lemmaQ1}
implies that $|z|_{\Im}\geq 1$. From Lemma \ref{lemmaQ2},
we then deduce
$$|g(z)|_{\mathcal{R}}\leq q^{c_0N}\max\{1,q^{-N},|c_0|_{\mathcal{R}},|c_0|_{\mathcal{R}}q^{-1},\ldots,|c_N|_{\mathcal{R}}q^{-N}\}=:C_2.$$
For all $z\in\CC_\infty$, $|g(z)|_{\mathcal{R}}\leq\max\{C_1,C_2\}$ and $|g|_{\mathcal{R}}$
is bounded over $\CC_\infty$, hence constant by Proposition \ref{entire}. But $g(0)=0$
and $g=0$. 
\end{proof}

\begin{Definition}
An $\mathcal{R}$-entire function
$f:\CC_\infty\rightarrow\mathcal{R}$ is called {\em tempered} if there exists a real number $C>0$ and an integer
$N\geq 0$
depending on $f$ such that for all $z\in\CC_\infty$, $|f(z)|_\mathcal{R}\leq C\max\{1,|\eC(z)|\}^N$.

We denote by $\mathcal{T}(\mathcal{R})$ the $\mathcal{R}$-algebra of tempered $\mathcal{R}$-entire functions. We also set $\mathcal{P}(\mathcal{R})=\mathcal{P}^!(\mathcal{R})\cap\mathcal{T}(\mathcal{R})$.
\end{Definition}

\begin{Remark}{ 
Note that by Proposition \ref{temperedper}, if $|\mathcal{R}|_{\mathcal{R}}=|\CC_\infty|$,
then $$\mathcal{P}(\mathcal{R})=\mathcal{R}[e_C].$$}
\end{Remark}

\section{Anderson generating functions} \label{andergenfunct}

For the remainder of this paper, $\mathcal{R}$ will either be $\TT_s$ or $\EE_s$ equipped with the Gauss norm $|\cdot|_{\mathcal{R}}=\|\cdot\|$, given by $\|\sum_i c_i t^i\| = \max_i\{|c_i|\}$. Both of these rings satisfy $|\mathcal{R}|_\mathcal{R}=|\CC_\infty|$.

We observe that the function $z \mapsto \omega(t)^{-1} f_t(z)$
 extends the map $\chi_t : A \rightarrow \FF_q[t] \subseteq \TT$ of the introduction to a $\TT$-entire function $\CC_\infty\rightarrow\TT$, see \cite[Th. 2.19]{PER}. Thus, to follow we shall write
 \[\chi_t(z) := \omega(t)^{-1} f_t(z).\]
 From the definition of $f_t(z)$, it is easy to see that $\chi_t(z)$ is $\FF_q$-linear.
  
For all $z \in \CC_\infty \setminus A$ we also set
\[u(z) := \frac{1}{\pitilde} \sum_{a \in A} \frac{1}{z-a} = \frac{1}{\expC(\widetilde{\pi}z)} = \frac{1}{\eC(z)}.\]

\begin{Lemma}\label{chilem}
The map $\chi_{t}:\CC_\infty\rightarrow\EE$ is the unique $\FF_q$-linear, $\EE$-entire function which restricts on $A$ to the $\FF_q$-algebra isomorphism $A\rightarrow\FF_q[t]$ determined by the assignment $\theta \mapsto t$, and satisfies
\[ \|\chi_t(z)\|\leq\max\{1,|\eC(z)|^{\frac{1}{q}} \} \quad \text{ for all } z \in \CC_\infty.\]
\end{Lemma}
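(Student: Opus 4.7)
The plan is to split the lemma into three pieces: the basic structural properties of $\chi_t := \omega(t)^{-1} f_t$, the growth bound, and the uniqueness.

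The first pieces---$\FF_q$-linearity, $\EE$-entireness, and the identification of $\chi_t|_A$ with the $\FF_q$-algebra map $\theta \mapsto t$---are essentially recorded in \cite[Th.~2.19]{PER}, and can be recovered on the spot by combining \eqref{AGFdiffeq} and \eqref{taudiffomega} into the $\tau$-difference equation
\[
\tau(\chi_t(z)) - \chi_t(z) \;=\; \frac{\eC(z)}{(t-\theta)\omega(t)}.
\]
For $a\in A$ the right-hand side vanishes, so $\chi_t(a)$ lies in the $\tau$-fixed subring $\FF_q[t]\subset\TT$; combined with $\chi_t(1)=1$ and the elementary identity $f_t(\theta z) = \eC(z)+tf_t(z)$ (which yields $\chi_t(\theta a)=t\chi_t(a)$), this pins down the morphism $\theta\mapsto t$.

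For the bound, the idea is to exploit the displayed difference equation rather than estimating the series defining $f_t$ term by term. Since $\tau(\omega) = (t-\theta)\omega$, $\|\omega\| = q^{1/(q-1)}$, and $\|\tau(F)\| = \|F\|^q$ on $\TT$, the right-hand side has Gauss norm exactly $|\eC(z)|/|\pitilde|$. Writing $\rho := \|\chi_t(z)\|$, the strict ultrametric inequality applied to $\tau(\chi_t(z)) - \chi_t(z)$ forces $\max\{\rho,\rho^q\} = |\eC(z)|/|\pitilde|$ whenever $\rho\notin\{0,1\}$. Splitting on whether $\rho\leq 1$ or $\rho>1$ yields either $\rho \leq |\eC(z)|/|\pitilde| \leq 1$ or $\rho = (|\eC(z)|/|\pitilde|)^{1/q} \leq |\eC(z)|^{1/q}$ (using $|\pitilde|>1$); in all cases $\rho \leq \max\{1,|\eC(z)|^{1/q}\}$.

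For uniqueness, if $g$ is another function with the listed properties and $h := g - \chi_t$, then $h$ is $\FF_q$-linear, $\EE$-entire, vanishes on $A$ (hence is $A$-periodic), and satisfies the same bound; in particular $\|h(z)\| \leq \max\{1,|\eC(z)|\}$. Proposition \ref{temperedper} with $N=1$ then yields $h = c_0+c_1\eC$ with $c_0,c_1\in\EE$, and $h(0)=0$ kills $c_0$. The residual inequality $\|c_1\|\cdot|\eC(z)| \leq \max\{1,|\eC(z)|^{1/q}\}$, evaluated along a sequence with $|\eC(z)|\to\infty$ (say $z=\theta^N$, $N\to\infty$), forces $\|c_1\|=0$. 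The principal obstacle is the growth bound, where the naive term-by-term estimate of the series for $f_t$ is unpleasant; the trick is to read the bound off the $\tau$-difference equation via the strict ultrametric principle, after which the uniqueness falls out of Proposition \ref{temperedper}.
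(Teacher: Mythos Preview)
Your proof follows essentially the same route as the paper's: both derive the $\tau$-difference equation $\tau(\chi_t(z)) = \chi_t(z) + \eC(z)/((t-\theta)\omega(t))$ from \eqref{AGFdiffeq} and \eqref{taudiffomega}, read off the growth bound from it via the case split $\|\chi_t(z)\|\leq 1$ versus $\|\chi_t(z)\|>1$, and then invoke Proposition~\ref{temperedper} for uniqueness. Your handling of the uniqueness step is in fact more explicit than the paper's terse ``then $f$ is constant'': you apply Proposition~\ref{temperedper} with $N=1$ to obtain $h=c_0+c_1\eC$ and then kill $c_1$ using the sharper exponent $1/q$, which is precisely the content hidden in that line.

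There is one slip to fix. The parenthetical example ``$z=\theta^N$'' does not do what you want: $\theta^N\in A$, so $\eC(\theta^N)=0$, not large. You need a sequence along which $|\eC(z)|\to\infty$; such sequences exist simply because $\eC:\CC_\infty\to\CC_\infty$ is surjective (e.g.\ pick $z_n$ with $\eC(z_n)=\theta^n$), but the elements of $A$ are exactly the zeros of $\eC$. Once you replace the example, the argument for $c_1=0$ goes through unchanged.
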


\begin{proof}
That $\chi_t(z)$ is $\FF_q$-linear, has image in $\EE$ and is $\EE$-entire, follows easily from the definitions. 

From (\ref{AGFdiffeq}) and (\ref{taudiffomega}) we deduce that $\chi_t(z)$ satisfies the $\tau$-difference equation
\begin{equation}\label{taudiff1}
\tau(\chi_t(z))=\chi_t(z) + (u(z)(t-\theta)\omega(t))^{-1}.\end{equation}

Thus, if $\|\chi_t(z)\| > 1$, then $\|\tau(\chi_t(z))\| = \|\chi_t(z)\|^q > \|\chi_t(z)\|$, and from \eqref{taudiff1} we obtain
\[\|\chi_t(z)\| = \|(t - \theta) \omega(t)\|^{-\frac{1}{q}} |\eC(z)|^{\frac{1}{q}}.\]
In all cases, we have
\[\|\chi_t(z)\| \leq \max\{1, \|(t - \theta) \omega(t)\|^{-\frac{1}{q}} |\eC(z)|^{\frac{1}{q}}\} \leq \max\{1, |\eC(z)|^{\frac{1}{q}}\},\]
yielding our inequality. Now, let $g:\CC_\infty\rightarrow\EE$ be another $\EE$-entire, $\FF_q$-linear function,
interpolating the map $\chi_t$ over $A$. Then, the function $f(z)=\chi_t(z)-g(z)$ 
is $\EE$-entire, $A$-periodic, vanishing over $A$, and such that
for all $z\in\CC_\infty$, $\|f(z)\|\leq\max\{1,|\eC(z)|^{q^{-1}}\}$. Then, $f$ is constant, therefore zero because 
vanishing at the elements of $A$. 
\end{proof}

\begin{Remark}\label{chigrwthinfty}{
When $|z|_\Im$ is sufficiently large, it is possible to show that $\|\chi_t(z)\| > 1$, and we obtain the equality $$\|\chi_t(z)\| = \|(t - \theta) \omega(t)\|^{-\frac{1}{q}} |\eC(z)|^{\frac{1}{q}}.$$}
\end{Remark}

We will also employ the following property. 

\begin{Lemma}\label{isometrylemma}
If $z\in\CC_\infty$, $|z|<q$, then $\|\chi_t(z)\|=|z|$.
In particular, $\chi_t$ induces an injective $\FF_q$-linear map
$\{z : |z|<q\}\rightarrow\{f\in\EE : \|f\|<q\}$.
\end{Lemma}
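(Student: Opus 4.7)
The plan is to expand $\chi_t(z)$ as a convergent $\FF_q$-linear Taylor series at $z=0$ and show that its linear term strictly dominates in Gauss norm whenever $|z|<q$. Starting from the explicit series for $f_t$ recalled in the introduction together with $\chi_t(z) = \omega(t)^{-1}f_t(z)$, I would write
\[\chi_t(z) = \sum_{n\geq 0} c_n z^{q^n}, \qquad c_n := \frac{\pitilde^{q^n}}{(\theta^{q^n}-t)\,d_n\,\omega(t)} \in \EE.\]
Using the standard values $|\pitilde| = q^{q/(q-1)}$, $\|\omega(t)\| = |\lambda_\theta| = q^{1/(q-1)}$, $\|\theta^{q^n}-t\| = q^{q^n}$, and $|d_n| = q^{n q^n}$, a direct computation gives
\[\|c_n\| = q^{(q^{n+1}-1)/(q-1)\,-\,(n+1)q^n};\]
in particular $\|c_0\|=1$, so $\|c_0 z\| = |z|$.

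Next, I would verify that $\|c_n z^{q^n}\| < |z|$ for every $n\geq 1$ and every $z$ with $|z|<q$. Writing $\alpha := \log_q|z|$ and clearing $(q-1)$ denominators, this reduces to the strict inequality
\[q^{n+1} - 1 - (n+1)(q-1)q^n + (q-1)(q^n-1)\alpha < 0.\]
Evaluated at $\alpha = 1$ and simplified, the left-hand side becomes $q^n\bigl(q - n(q-1)\bigr) - q$. For $n=1$ this is exactly $0$; for $n\geq 2$ one has $q - n(q-1) \leq 2-q \leq 0$, so the expression is at most $-q < 0$. Hence for $\alpha<1$ the inequality becomes strict in every case $n\geq 1$, after subtracting the positive quantity $(q-1)(q^n-1)(1-\alpha)$. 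The ultrametric triangle inequality applied term-by-term to the Gauss norm on $\EE$ then yields $\|\chi_t(z)\| = \|c_0 z\| = |z|$.

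Finally, for the second assertion, $\FF_q$-linearity of $\chi_t$ reduces injectivity on $\{|z|<q\}$ to the implication $\chi_t(z)=0 \Rightarrow z=0$: if $z\neq 0$ with $|z|<q$, the computation above gives $\|\chi_t(z)\| = |z| > 0$. The same computation shows $\|\chi_t(z)\| = |z| < q$, placing the image inside $\{f\in\EE:\|f\|<q\}$. The main technical hurdle is the bookkeeping in the second step: the inequality is exactly tight at $n=1$, $|z|=q$, which both confirms the sharpness of the bound and makes the strict-versus-weak distinction critical throughout the argument.
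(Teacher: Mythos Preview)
Your proof is correct and follows the same approach as the paper's: both show that the linear Taylor coefficient of $\chi_t$ strictly dominates the higher terms on $|z|<q$. The paper packages this more compactly as the observation that $f_t(z)$ and $e_C(z/\theta)$ share the same Newton polygon (together with the normalization $\|\pitilde/(\theta\omega)\|=1$), whereas you carry out the equivalent coefficient-norm inequalities explicitly.
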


\begin{proof}
By definition, the Newton polygon of the function $f_t(z)$ (as a series of $\TT[[z]]$ and with the 
Gauss norm over $\TT$)
is equal to the Newton polygon of $e_C(\frac{z}{\theta})$. The Lemma follows
from the identity $\|\frac{\pitilde}{\theta\omega}\|=1$. 
\end{proof}

We mention a final interesting result concerning the function $\chi_t(z)$ though we do not need it in the sequel.
\begin{Lemma}
For $z\in\CC_\infty$, we have $\chi_t(z)=0$  if and only if $z=0$.
\end{Lemma}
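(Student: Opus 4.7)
The plan is to reduce the vanishing of $\chi_t(z)$ to the vanishing of the Taylor coefficients of $f_t(z)$ in the variable $t$. First, since $\chi_t(z) = \omega(t)^{-1} f_t(z)$ and $\omega(t)$ is a nonzero element of the integral domain $\TT$, I would observe that $\chi_t(z) = 0$ if and only if $f_t(z) = 0$. Then I would invoke the defining series expansion
\[f_t(z) = \sum_{j \geq 0} \eC(z\theta^{-j-1})\, t^j \in \TT,\]
whose coefficients lie in $\CC_\infty$. Viewing $\TT$ as a subring of $\CC_\infty[[t]]$, the vanishing of $f_t(z)$ forces $\eC(z\theta^{-j-1}) = 0$ for every $j \geq 0$.

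Next I would use the fact that the zero set of $\eC$ on $\CC_\infty$ is exactly $A$ (with multiplicity one). The condition $\eC(z\theta^{-j-1}) = 0$ then translates to $z \in \theta^{j+1} A$ for every $j \geq 0$. Already the case $j=0$ places $z$ inside $A$, and intersecting the constraints over all $j$ forces $\deg_\theta z \geq j+1$ for every $j$, which is only compatible with $z = 0$. The reverse implication is immediate from the $\FF_q$-linearity of $\chi_t$ recorded in Lemma \ref{chilem}.

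I do not anticipate a real obstacle here: the argument is essentially a direct reading of the definition of $f_t$. An alternative route, which I would mention but not pursue, is to apply $\tau$ to the identity $\chi_t(z) = 0$ and appeal to the $\tau$-difference equation \eqref{taudiff1}, which gives $\eC(z)/((t-\theta)\omega(t)) = 0$ in $\TT$ and hence $\eC(z) = 0$; then $z \in A$, and the injectivity of $\chi_t$ restricted to $A$ (where it acts as the algebra isomorphism $\theta \mapsto t$ from Lemma \ref{chilem}) concludes. Both paths bypass the heavier analytic machinery of Section \ref{entirefunctions} and depend only on the explicit expansion of $f_t$ together with the description of the zero set of $\eC$.
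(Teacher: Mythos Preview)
Your argument is correct, and both the primary and alternative routes work. One small wording slip: when you write ``$\deg_\theta z \geq j+1$'' you mean the $\theta$-adic valuation (i.e.\ $\theta^{j+1}\mid z$ in $A$), not the degree; the conclusion $\bigcap_{j\geq 0}\theta^{j+1}A=\{0\}$ is of course unaffected.

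Your approach is genuinely different from the paper's. The paper does not read off Taylor coefficients; instead it invokes the $\FF_q[t]$-linear extension $\boldsymbol{e}:\TT\to\TT$ of $\exp_C$ from \cite{ANG&PEL2}, uses the short exact sequence $0\to\widetilde{\pi}A[t]\to\TT\to C(\TT)\to 0$ to identify the kernel of $\boldsymbol{f}(h)=\boldsymbol{e}(\widetilde{\pi}h)$ as $A[t]$, and then expresses $\chi_t(z)=\boldsymbol{f}\bigl(\tfrac{z}{\theta-t}\bigr)\big/\boldsymbol{f}\bigl(\tfrac{1}{\theta-t}\bigr)$, so that $\chi_t(z)=0$ forces $\tfrac{z}{\theta-t}\in A[t]$ and hence $z=0$. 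Your coefficient-by-coefficient argument is more elementary and self-contained, requiring nothing beyond the defining expansion of $f_t$ and the zero set of $e_C$; the paper's argument, by contrast, situates the result inside the module-theoretic framework of the Carlitz module over $\TT$, which is conceptually heavier but connects the lemma to the broader structure used elsewhere. Your alternative via the $\tau$-difference equation \eqref{taudiff1} is also valid and is perhaps the slickest of the three.
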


\begin{proof}
We denote by $\boldsymbol{e}$ the unique $\FF_q[t]$-linear endomorphism of $\TT$
extending the Carlitz exponential function $\exp_C:\CC_\infty\rightarrow\CC_\infty$. 
Following \cite{ANG&PEL2},
this yields a short exact sequence of $A[t]$-modules
$$0\rightarrow \widetilde{\pi}A[t]\rightarrow\TT\rightarrow C(\TT)\rightarrow0,$$
where $C(\TT)$ is the unique $A[t]$-module structure over $\TT$ extending  $\FF_q[t]$-linearly the Carlitz module $\theta \mapsto \theta + \tau$
over $\CC_\infty$. If we set $\boldsymbol{f}(h)=\boldsymbol{e}(\widetilde{\pi}h)$,
then the kernel of $\boldsymbol{f}$ is $A[t]$ and 
$$\chi_t(z)=\frac{\boldsymbol{f}\left(\frac{z}{\theta-t}\right)}{\boldsymbol{f}\left(\frac{1}{\theta-t}\right)},\quad z\in\CC_\infty,$$
from which we easily deduce that $\chi_t(z)=0$ if and only if $z=0$, if $z\in\CC_\infty$. 
\end{proof}

\begin{Remark}
{ The function $z\mapsto \frac{\chi_t(z)}{z}$ is a non-constant $\EE$-entire function
$\CC_\infty\rightarrow\EE$ which never vanishes.}
\end{Remark}

\section{The functions $\psi_s$ and the proof of Theorem \ref{pellarinperkins}}\label{sect4}

Before giving the proof of Theorem \ref{pellarinperkins}, we require three auxiliary results.

\begin{Lemma} \label{Lgenlem} %previously lemma7
Let $z\in\CC_\infty$ be such that $|z|<1$. Let $s$ be a positive integer, and let  $m$ be the unique integer $0 < m \leq q-1$ such that $m\equiv s\pmod{q-1}$. As power series in $\frac{(\pitilde z)^m}{\omega(t_1)\cdots\omega(t_s)}K(t_1,\dots,t_s)[[\pitilde z]]$, we have
$$ z\psi_s(z)=\sum_{k\geq 0}(\pitilde z)^{m+k(q-1)}\frac{L(\chi_{t_1}\cdots\chi_{t_s},m+k(q-1))}{\pitilde^{m+k(q-1)}}.$$
\end{Lemma}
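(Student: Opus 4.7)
The plan is a direct expansion. Since $|z|<1$ and $|a|\geq 1$ for every nonzero $a\in A$, the geometric series
$$\frac{1}{z-a} = -\sum_{k\geq 0}\frac{z^k}{a^{k+1}}$$
converges, so substituting into the primed definition of $\psi_s(z)$ gives a formal rearrangement that I need to justify via standard convergence of the multivariate $L$-series recalled in the introduction.

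Next, I would exploit the unique factorization $a = c\tilde a$ with $c\in\FF_q^\times$ and $\tilde a\in A^+$, together with the $\FF_q$-linearity of each $\chi_{t_i}$, to obtain $\chi_{t_1}(a)\cdots\chi_{t_s}(a) = c^s\chi_{t_1}(\tilde a)\cdots\chi_{t_s}(\tilde a)$. Regrouping the double sum yields
$$\psi_s(z) = -\sum_{\tilde a\in A^+}\sum_{k\geq 0}\frac{z^k\chi_{t_1}(\tilde a)\cdots\chi_{t_s}(\tilde a)}{\tilde a^{k+1}}\sum_{c\in\FF_q^\times}c^{s-k-1}.$$
The inner character sum $\sum_{c\in\FF_q^\times}c^{s-k-1}$ equals $-1$ when $k+1\equiv s\pmod{q-1}$ and vanishes otherwise; this congruence is exactly $k\equiv m-1\pmod{q-1}$, so only the indices $k = m-1+j(q-1)$ with $j\geq 0$ survive, giving
$$\psi_s(z) = \sum_{j\geq 0}z^{m-1+j(q-1)}L(\chi_{t_1}\cdots\chi_{t_s},\,m+j(q-1)).$$

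Multiplying by $z$ and inserting the compensating powers of $\pitilde$ produces the asserted identity as a power series in $\pitilde z$. For the module membership claim, I would invoke the rationality result of the first author in \cite{PEL} (of which \eqref{pelsid} is the prototype): for every $n$ with $n\equiv s\pmod{q-1}$, one has $L(\chi_{t_1}\cdots\chi_{t_s},n)/\pitilde^n \in \omega(t_1)^{-1}\cdots\omega(t_s)^{-1}K(t_1,\dots,t_s)$. Factoring a single $(\pitilde z)^m$ out of every term then places the series in $\frac{(\pitilde z)^m}{\omega(t_1)\cdots\omega(t_s)}K(t_1,\dots,t_s)[[(\pitilde z)^{q-1}]]$, which sits inside the module named in the statement.

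I do not foresee a real obstacle here; the argument is essentially a termwise rearrangement combined with the orthogonality relation on $\FF_q^\times$. The only non-computational input is the citation of Pellarin's rationality theorem, which is needed purely to interpret the expansion as living in the specified module rather than merely in $K_\infty(t_1,\dots,t_s)[[\pitilde z]]$.
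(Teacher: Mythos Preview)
Your argument is correct and essentially identical to the paper's own proof: the same geometric-series expansion, the same factorization over $\FF_q^\times$, and the same orthogonality relation. The only slip is bibliographic---the rationality result $L(\chi_{t_1}\cdots\chi_{t_s},n)/\pitilde^n\in\omega(t_1)^{-1}\cdots\omega(t_s)^{-1}K(t_1,\ldots,t_s)$ for general $s$ is \cite[Theorem~1]{ANG&PEL1}, not \cite{PEL} (which only treats $s=1$).
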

\begin{proof}
We have $\psi_s(z)=\sum_{a\in A\setminus\{0\}}\frac{\chi_{t_1}(a)\cdots\chi_{t_s}(a)}{z-a}$.
Since for $a\in A\setminus\{0\}$ and $|z| < 1$, we have $\frac{1}{z-a}=-\frac{1}{a}\sum_{i\geq 0}\left(\frac{z}{a}\right)^i$, we deduce
\begin{eqnarray*}
\psi_s(z)&=&-\sum_{a\in A\setminus\{0\}}a^{-1}\chi_{t_1}(a)\cdots\chi_{t_s}(a)\sum_{i\geq 0}\left(\frac{z}{a}\right)^i\\
&=&-\sum_{i\geq 0}z^i\sum_{\lambda\in\FF_q^\times}\lambda^{s-i-1}\sum_{a\in A^+}\frac{\chi_{t_1}(a)\cdots\chi_{t_s}(a)}{a^{i+1}}\\
&=&\sum_{k\geq 0}z^{m-1+k(q-1)}L(\chi_{t_1}\cdots\chi_{t_s},m+k(q-1)).
\end{eqnarray*}
Multiplying by $z$ and applying \cite[Theorem 1]{ANG&PEL1} finishes the proof. 
\end{proof}

\begin{Remark}
We observe that for $s=0$, the previous lemma reduces to the well-known identity of Carlitz
$z\sum_{a\in A} (z-a)^{-1} = \sum_{k\geq 0}z^{k(q-1)}\zeta_C(k(q-1))$, where, for positive integers $n$, $\zeta_C(n) := \sum_{a\in A^+}a^{-n}$ are the Carlitz zeta values.\end{Remark}

\begin{Lemma}\label{algebraicindependence}
The functions $\eC,\chi_{t_1},\ldots,\chi_{t_s}$ are algebraically
independent over the field $\CC_\infty((t_1,\ldots,t_s))$.
\end{Lemma}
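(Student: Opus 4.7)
The plan is to argue by descending induction on $M := \max\{i_0 : c_I \neq 0\}$ attached to a hypothetical nontrivial relation
\[\sum_{I=(i_0, i_1, \ldots, i_s)} c_I(t_1,\ldots,t_s)\, \eC(z)^{i_0} \prod_{j=1}^s \chi_{t_j}(z)^{i_j} = 0, \qquad z \in \CC_\infty,\]
with coefficients $c_I \in F := \CC_\infty((t_1,\ldots,t_s))$ not all zero. The key observation is that the test points $z = \theta^n \in A$ simultaneously annihilate $\eC$ and specialize the $\chi_{t_j}$ to very structured values, after which a Vandermonde argument over $F$ handles what remains.

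The first step is to substitute $z = \theta^n$ in the alleged relation. Since $\theta^n \in A = \ker(\eC)$ we have $\eC(\theta^n) = 0$, while Lemma \ref{chilem} identifies $\chi_{t_j}$ on $A$ with the $\FF_q$-algebra map $\theta \mapsto t_j$, giving $\chi_{t_j}(\theta^n) = t_j^n$. Substitution therefore kills every term with $i_0 \geq 1$, leaving
\[\sum_{I : i_0 = 0} c_I\, t_1^{n i_1} \cdots t_s^{n i_s} = 0 \in F \qquad (n = 0, 1, 2, \ldots).\]
Enumerating the putatively nonzero $c_I$ with $i_0 = 0$ as $c_{I_1}, \ldots, c_{I_N}$ and setting $y_k := t_1^{i_{k,1}} \cdots t_s^{i_{k,s}}$, the $y_k$ are pairwise distinct elements of $F$, so the equations for $n = 0, 1, \ldots, N-1$ form a Vandermonde system with nonzero determinant $\prod_{j<k}(y_k - y_j)$; hence each $c_{I_k}$ must vanish, forcing all coefficients with $i_0 = 0$ to be zero.

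The second step is to factor $\eC(z)$ out of the remaining relation to obtain $\eC(z)\,R(z) = 0$ with
\[R(z) := \sum_{I : i_0 \geq 1} c_I\, \eC(z)^{i_0 - 1} \prod_{j=1}^s \chi_{t_j}(z)^{i_j}.\]
Because $A$ is discrete in $\CC_\infty$ (distinct elements of $A$ differ by a polynomial of norm at least one) and $\eC$ vanishes only on $A$, the identity $R(z) = 0$ holds on the dense complement $\CC_\infty \setminus A$, and continuity of $R$ then propagates this to all of $\CC_\infty$. The resulting relation has maximal $\eC$-exponent $M-1$, contradicting the minimality of $M$ and closing the induction; the base case $M = 0$ is handled directly by the same Vandermonde step. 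I expect the main obstacle to be not the core argument but the bookkeeping of the ring-theoretic setup: one must interpret the relation simultaneously as taking values via the functions $\chi_{t_j}(z) \in \TT$ and via the scalar coefficients $c_I \in F$, working in a common ambient ring such as $\mathrm{Frac}(\CC_\infty[[t_1,\ldots,t_s]])$ in which the Vandermonde determinant can be inverted.
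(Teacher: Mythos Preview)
Your argument is correct and takes a genuinely different route from the paper's proof.

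The paper proceeds by first passing to the larger coefficient field $\mathcal{L}=\mathcal{K}((e_C))$ and proving that $\chi_{t_1},\ldots,\chi_{t_s}$ are algebraically independent over $\mathcal{L}$; the transcendence of $e_C$ over $\mathcal{K}$ is then invoked separately at the end. The key step in the paper is a translation argument: from $\chi_t(z+a)=\chi_t(z)+\chi_t(a)$ one deduces that any irreducible relation $P$ is (up to a unit) invariant under all substitutions $X_j\mapsto X_j+\chi_{t_j}(a)$, and comparing constant terms shows that $P$ vanishes on the image of $a\mapsto(\chi_{t_1}(a),\ldots,\chi_{t_s}(a))$, which is asserted to be Zariski-dense in $\mathbb{A}^s(\mathcal{L})$.

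Your proof handles $e_C$ and the $\chi_{t_j}$ in a single inductive package, and replaces the translation/irreducibility machinery by the direct evaluation $z=\theta^n$ together with an explicit Vandermonde determinant in the distinct monomials $t_1^{i_1}\cdots t_s^{i_s}\in\mathcal{K}$. This is more elementary, and in fact it is essentially what underlies the paper's unproven Zariski-density claim: evaluating at $a=\theta^n$ and running Vandermonde is exactly how one would verify that density. Conversely, the paper's translation argument is more structural and would adapt to other families of functions satisfying an additive cocycle relation, whereas your approach is tied to the specific values $\chi_{t_j}(\theta^n)=t_j^n$. Your concern about the ambient ring is minor: since $\TT_s\subset\CC_\infty[[t_1,\ldots,t_s]]\subset\mathcal{K}$ and $\mathcal{K}[[z]]$ is an integral domain, the whole computation, including the division by $e_C$, takes place inside $\mathcal{K}[[z]]$ without difficulty.
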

\begin{proof} We set $\mathcal{K}=\CC_\infty((t_1,\ldots,t_s))$ and $\mathcal{L}=\mathcal{K}((e_C))$. We choose the canonical embedding of $\TT_s \rightarrow \mathcal{K}$. 

We claim that $\chi_{t_1}$,..., $\chi_{t_s}$ are algebraically independent over $\mathcal{L}$.
Let us suppose, by contradiction, that 
there exists $P\in\mathcal{L}[X_1,\ldots,X_s]$, irreducible, such that
$P(\chi_{t_1},\ldots,\chi_{t_s})=0$, let $\mathcal{P}$ be the prime ideal of $\mathcal{L}[X_1,\ldots,X_s]$
generated by $P$. Then, for all $a\in A$, $$P_a=P(X_1+\chi_{t_1}(a),\ldots,X_s+\chi_{t_s}(a))\in \mathcal{P},$$ 
because $\chi_t(z+a)=\chi_t(z)+\chi_t(a)$, for all $z\in\CC_\infty$ and $a\in A$.

Hence, $P_a=HP$ for some $H\in\mathcal{L}[X_1,\ldots,X_s]\setminus\{0\}$ ($P_a$ is clearly non-zero).
The total degrees of $P$ and $P_a$ for $a\in A$ are equal; then $H\in\mathcal{L}^\times$.
The constant term of $P_a$ is the polynomial $P(\chi_{t_1}(a),\ldots,\chi_{t_s}(a))$.
If $R$ is the constant term of $P$, we thus have the identity:
$$P(\chi_{t_1}(a),\ldots,\chi_{t_s}(a))=HR,\quad a\in A.$$
But the image of the map
$A\rightarrow\mathbb{A}^s(\mathcal{L})$
defined by $a\mapsto(\chi_{t_1}(a),\ldots,\chi_{t_s}(a))$
is Zariski-dense. Hence, $P$ is non-zero and constant, a contradiction.

In particular, the functions $\chi_{t_1}(z),\ldots,\chi_{t_s}(z)$ are algebraically independent over 
$\mathcal{K}(e_C)$ which is a transcendental extension of $\mathcal{K}$. 
\end{proof}

\begin{Remark}{
In fact, the functions of the previous lemma are even algebraically independent over $\CC_\infty((t_1,\ldots,t_s))(z)$, but we do not need this property.} \end{Remark} 

\begin{Proposition}\label{elementary} Let $K$ be a field,
let us consider formal series
$f_0,f_1,\ldots,f_s\in K[[Z]]$, let $L/K$ be a field extension and let us suppose that the series $f_i$ are algebraically independent
over $L$. Let $P$ be a polynomial of $L[X_0,\ldots,X_s]$ such that
$P(f_0,\ldots,f_s)\in K[[Z]]$. Then, $P\in K[X_0,\ldots,X_s]$.
\end{Proposition}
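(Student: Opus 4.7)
The plan is to exploit the $K$-linear decomposition of $P$ coming from a well-chosen basis of $L$ over $K$, and then reduce the claim to an application of the algebraic independence hypothesis. Concretely, one writes $P = \sum_\alpha c_\alpha X^\alpha$ with $c_\alpha \in L$, and considers the finite-dimensional $K$-vector subspace $V$ of $L$ generated by the finitely many coefficients $c_\alpha$ together with $1$. Since $1 \in V$, I can fix a $K$-basis of $V$ of the form $1 = e_0, e_1, \dots, e_m$.

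Next, I expand each $c_\alpha = c_{\alpha,0} + \sum_{j=1}^m c_{\alpha,j} e_j$ with $c_{\alpha,j} \in K$, and group terms by basis element to obtain
\[ P = Q_0 + \sum_{j=1}^m e_j\, Q_j, \qquad Q_j := \sum_\alpha c_{\alpha,j} X^\alpha \in K[X_0,\dots,X_s]. \]
Evaluating at $(f_0,\dots,f_s)$ gives $P(f_0,\dots,f_s) = Q_0(f_0,\dots,f_s) + \sum_{j=1}^m e_j\, Q_j(f_0,\dots,f_s)$, and the hypothesis $P(f_0,\dots,f_s) \in K[[Z]]$ forces
\[ \sum_{j=1}^m e_j\, Q_j(f_0,\dots,f_s) \in K[[Z]]. \]

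The crucial step is to read this coefficient by coefficient in $Z$. For each $n \geq 0$, the $n$-th coefficient of $Q_j(f_0,\dots,f_s)$ is an element of $K$ (since $f_i \in K[[Z]]$ and $Q_j \in K[X]$), so the above containment becomes a $K$-linear relation among $1, e_1, \dots, e_m$ (with the term contributed by $1$ absorbing the right-hand side). By linear independence of $1, e_1, \dots, e_m$ over $K$, each coefficient of $Q_j(f_0,\dots,f_s)$ vanishes for $j \geq 1$, hence $Q_j(f_0,\dots,f_s) = 0$ in $K[[Z]]$ for $j = 1, \dots, m$.

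Finally, since $f_0, \dots, f_s$ are algebraically independent over $L$, they are a fortiori algebraically independent over $K$, so each such vanishing forces $Q_j = 0$ in $K[X_0, \dots, X_s]$ for $j \geq 1$. Therefore $P = Q_0 \in K[X_0, \dots, X_s]$, which is what we wanted. There is no serious obstacle here; the only point one has to be mindful of is to include $1$ in the chosen basis of $V$, so that the constant term $Q_0$ is separated cleanly from the remaining terms and the linear independence argument directly applies.
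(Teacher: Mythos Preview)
Your proof is correct. The key steps---decomposing $P$ along a $K$-basis of the finite-dimensional $K$-span of its coefficients (augmented by $1$), reading off the relation coefficient by coefficient in $Z$, and invoking algebraic independence over $K$---are all sound, and your care in including $1$ in the basis is exactly what makes the argument go through cleanly.

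The paper takes a different, in some sense dual, route. Rather than decomposing on the $L$-side via a $K$-basis of the coefficient space, it works on the power-series side: the monomials $M_0,\ldots,M_N$ in $f_0,\ldots,f_s$ of degree at most $\deg P$ are linearly independent over $L$ (by algebraic independence), so the infinite matrix $U$ whose columns record the $Z$-coefficients of the $M_i$ has full column rank; one extracts an invertible $(N+1)\times(N+1)$ submatrix with entries in $K$, and since $P(f_0,\ldots,f_s)$ also has coefficients in $K$, inverting this submatrix forces the $a_i\in K$. Your argument is arguably more transparent and avoids the matrix-extraction step; the paper's argument is slightly more direct in that it solves for the coefficients outright rather than passing through a vanishing argument. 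Both are elementary and of comparable length.
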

\begin{proof} Let $P\in L[X_0,\ldots,X_s]$ be such that
$P(f_0,\ldots,f_s)\in K[[Z]]$, and let
$d$ be its degree.
Since the functions $f_0,\ldots,f_s$ are algebraically independent
over $L$, the monic monomials $M_0,\ldots,M_N$ in $f_0,\ldots,f_s$
of degree $\leq d$ are linearly independent over $L$ and there
is an $L$-rational linear combination $F=\sum_ia_iM_i\in K[[Z]]$.
The matrix $U$ whose columns are the coefficients of the formal series $M_i \in K[[Z]]$
(it has $N+1$ columns and infinitely many rows) has maximal
rank and the right multiplication by the column matrix with coefficients $a_i$
yields an infinite column of elements of $K$ which cannot be identically
zero. Extracting from $U$ a non-singular square matrix of order $N+1$
and inverting it, we deduce that the coefficients $a_0,\ldots,a_N$ are
in $K$. 
\end{proof}

\subsection{Proof of Theorem \ref{pellarinperkins}} \label{pfthm2}
Recall that we have set $\Sigma_{s} := \{1,2,\dots,s\}$. For subsets $I \subseteq \Sigma_{s}$ we define $\chi^I(z) := \prod_{i \in I} \chi_{t_i}(z)$, and, as before, $\chi^\emptyset(z) = 1$. For $I \subseteq \Sigma_s$, we recall that $|I|$ denotes the cardinality of $I$. 

We have
\begin{eqnarray*}
\psi_s(z) &=& \sum_{a \in A} \frac{\chi_1(a-z+z)\cdots \chi_s(a-z +z)}{z-a} \\ &=& \widetilde{\pi} u(z) \chi^{\Sigma_s}(z) + \sum_{I \subset \Sigma_s} \chi^I(z) \sum_{a \in A}\frac{\chi^{I^c}(a-z)}{z-a},\end{eqnarray*}
where $I \subset \Sigma_s$ runs over all strict subsets of $\Sigma_s$, and $I^c := \Sigma_s \setminus I$.

We argue that the sums $h_I(z) := \sum_{a \in A}\frac{\chi^{I^c}(a-z)}{z-a}$, for strict subsets $I \subset \Sigma_s$, define $\EE_s$-entire, $A$-periodic, tempered functions. Let $r \in q^\QQ$, and let $z \in B_0(r) : = \{z \in \CC_\infty : |z|<r\}$. 
Consider $a \in A$. If $|z - a| \geq q$, by Lemma \ref{chilem}, we have 
\begin{equation} \label{rationorm}
\left\|\frac{\chi^{I^c}(a-z)}{z-a} \right\| \leq \frac{\max\{1, \sup_{z \in B_0(r)}|\eC(z)|^{\frac{|I^c|}{q}} \}}{|z-a|},\end{equation} 
and the right side tends to zero as $|a| \rightarrow \infty$. If $|z-a| < q$, by Lemma \ref{isometrylemma}, \begin{equation} \label{rationorm2}
\left\|\frac{\chi^{I^c}(a-z)}{z-a} \right\| = |z-a|^{|I^c|-1}.\end{equation} 
The singularity of $\frac{\chi^{I^c}(z-a)}{z-a}$ at $z = a$ is clearly removable, and thus this function is $\EE_s$-entire for all $a \in A$.  It follows from Proposition \ref{uniformclosedness} that $h_I$ is $\EE_s$-entire, and clearly $h_I$ is $A$-periodic. 
Temperedness follows from \eqref{rationorm} and \eqref{rationorm2} giving, for some positive real number $C$,
\[\left\|\sum_{a \in A}\frac{\chi^{I^c}(a-z)}{z-a} \right\| \leq C\max\{1, |e_C(z)|^{\frac{|I^c|}{q}}\}.\]
Thus, for all $I \subset \Sigma_s$, $h_I \in \EE_s[e_C]$ of degree in $e_C$ at most $|I^c|/q$. 

For $I \subsetneq \Sigma_s$, let $g_I \in \TT_s[Z]$ be such that $g_I(\eC) = \pitilde^{-1} h_I\prod_{j \in I^c}\omega(t_j)$. It remains to show that 
$g_I(\eC) \in K(t_1,\dots,t_s)[\eC]$.
Multiplying out the numerators of the functions $h_I$, this follows easily from Lemma \ref{Lgenlem} and Proposition \ref{elementary} applied to the functions $\eC$, $f_{t_1}$, $\ldots$, $f_{t_s}$ which
are algebraically independent by Lemma \ref{algebraicindependence} and have their expansions in powers of $\widetilde{\pi}z$ defined over $K(t_1,\ldots,t_s)$. \hfill $\qed$

\subsection{Proof of Theorem \ref{thrudy1}} \label{proofofthe1}
Now we may deduce Theorem \ref{thrudy1} from Theorem \ref{pellarinperkins} and Lemma \ref{chilem}. By Theorem \ref{pellarinperkins} we have
\[\psi_1 = \pitilde u \chi_{t} + h_\emptyset,\]
for some $h_\emptyset \in \TT$. By Lemma \ref{chilem}, we see that $\psi_1 - \pitilde u \chi_{t} \rightarrow 0$ as $|z|_\Im \rightarrow \infty$. Hence, $h_\emptyset = 0$. We make the connection with Papanikolas' function via \eqref{papdiffeq}, and this puts the constraint on $z$. \hfill $\qed$

\section{Evaluations at roots of unity}
Throughout this section, $s$ will be a fixed positive integer, and we retain the notation $\Sigma_{s} = \{1,2,\dots,s\}$. We recall our notation from the introduction: we let $\mathfrak{p}_1,\ldots,\mathfrak{p}_s$ be primes (that is, irreducible monic polynomials) of
$A$; we also set $\mfrak=\mathfrak{p}_1\cdots \mathfrak{p}_s$. Let us choose, for all $i=1,\ldots,s$, a root $\zeta_i$ of $\mathfrak{p}_i$ in $\FF_q^{ac}$, the algebraic closure of $\FF_q$ in $\CC_\infty$.
Let $$\ev_\mfrak:\TT_s\rightarrow\CC_\infty$$ be the evaluation map
sending a formal series $\sum_{i_1,\ldots,i_s}c_{i_1,\ldots,i_s}t_{1}^{i_1}\cdots t_{s}^{i_s}\in\TT_s$ to $\sum_{i_1,\ldots,i_s}c_{i_1,\ldots,i_s}\zeta_{1}^{i_1}\cdots \zeta_{s}^{i_s}$.

For $j \geq 0$, let $A(j)$ be the $\FF_q$-vector subspace of $A$ consisting of all polynomials of degree strictly less than $j$; of course, $A(0) = \{0\}$. Let $Z$ be an indeterminate over $\CC_\infty$, and let $K\{Z\}$ denote the non-commutative ring of $\FF_q$-linear polynomials with coefficients in $K$, with composition as product. For $j \geq 0$, let $E_j(Z) := \frac{1}{d_j}\prod_{a \in A(j)} (Z-a) \in K\{Z\}$; see \cite[Theorem 3.1.5]{GOS}. For $a \in A$, let $C_a(Z) := \sum_{j \geq 0} E_j(a) Z^{q^j}$. These are the polynomials arising from the $\FF_q$-linear map $A \rightarrow A\{Z\}$ determined by $\theta \mapsto \theta Z + Z^q$, by \cite[Corollaries 3.5.3 and 3.5.4]{GOS}.

For each subset $J \subset \Sigma_{s}$ define
\[ M^J_\mfrak(Z) := \sum_{b \in A/ \mfrak A} \frac{C_\mfrak(Z - \eC(b / \mfrak))}{(Z - \eC(b / \mfrak))} \prod_{j \in J} b(\zeta_j) \in A[\zeta_{1},\dots,\zeta_s,\eC(\mfrak^{-1})][Z]. \]
Observe that the degree in $Z$ of $M_\mfrak^J(Z)$ is strictly less than $|\mfrak|$ and that $M^J_\mfrak$ is the Lagrange interpolation polynomial for the data $\eC(a / \mfrak) \mapsto \mfrak \prod_{j \in J} a(\zeta_j)$, defined for $a \in A/\mfrak A$. For $J=\Sigma_s$ we simply write $M^{\Sigma_s}_\mfrak=M_\mfrak$.

\begin{Proposition} \label{evchiprop}
For all $z \in \CC_\infty \setminus A$, square-free monic polynomials $\mfrak$ and subsets $J\subseteq \Sigma_{s}$, the following identity holds,
\[ {\pitilde^{-1}}\ev_\mfrak({\psi_J(z)}) = \frac{1}{\mfrak} \frac{M_\mfrak^J(\eC(z / \mfrak))}{C_\mfrak(\eC(z/\mfrak))} \in K(\zeta_1,\dots,\zeta_s,\eC(\mfrak^{-1}))(u_\mfrak(z)),\]
and when $|z|_\Im$ is sufficiently large, we have \[\pitilde^{-1}\ev_\mfrak(\psi_J)(z) \in {u_\mfrak}(z) A[\zeta_1,\dots,\zeta_s,\eC(\mfrak^{-1})][[u_\mfrak(z)]].\]
\end{Proposition}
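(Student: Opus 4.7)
The plan is to unfold the definition of $\ev_\mfrak(\psi_J(z))$, split the summation over $a\in A$ into residue classes modulo $\mfrak$, and recognize the resulting expression as a Lagrange interpolation in disguise.

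First, since $\ev_\mfrak(\chi_{t_j}(a)) = a(\zeta_j)$ depends only on $a$ modulo $\pfrak_j$, the product $\prod_{j\in J}a(\zeta_j)$ depends only on $a\bmod\mfrak$. Writing $a=b+\mfrak c$ with $b$ running over a set of representatives of $A/\mfrak A$ and $c\in A$, and applying the defining formula for $u_\mfrak$ to the shifted point $z-b$, together with the $\FF_q$-linearity of $\eC$ and the vanishing $\eC(b)=0$ for $b\in A$, the inner sum evaluates as
\[\sum_{c\in A}\frac{1}{z-b-\mfrak c}=\frac{\pitilde}{\mfrak\,(\eC(z/\mfrak)-\eC(b/\mfrak))}.\]
Summing over $b$ then yields
\[\pitilde^{-1}\ev_\mfrak(\psi_J(z))=\frac{1}{\mfrak}\sum_{b\in A/\mfrak A}\frac{\prod_{j\in J}b(\zeta_j)}{\eC(z/\mfrak)-\eC(b/\mfrak)}.\]

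The second ingredient is the identity $C_\mfrak(Z-\eC(b/\mfrak))=C_\mfrak(Z)$ for every $b\in A$, which is immediate from the $\FF_q$-linearity of $C_\mfrak$ together with $C_\mfrak(\eC(b/\mfrak))=\eC(b)=0$. Inserting this into the definition of $M_\mfrak^J$ rewrites the right-hand side of the proposition as
\[\frac{1}{\mfrak}\cdot\frac{M_\mfrak^J(Z)}{C_\mfrak(Z)}=\frac{1}{\mfrak}\sum_{b\in A/\mfrak A}\frac{\prod_{j\in J}b(\zeta_j)}{Z-\eC(b/\mfrak)},\]
which at $Z=\eC(z/\mfrak)$ matches the expression computed above. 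The membership in $K(\zeta_1,\dots,\zeta_s,\eC(\mfrak^{-1}))(u_\mfrak(z))$ follows from the identity $\mfrak\,\eC(z/\mfrak)=1/u_\mfrak(z)$ and inspection of the coefficients of $M_\mfrak^J$ and $C_\mfrak$.

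For the final assertion, I would observe that $M_\mfrak^J$ has degree in $Z$ strictly less than $|\mfrak|=\deg_Z C_\mfrak$ with coefficients in $A[\zeta_1,\dots,\zeta_s,\eC(\mfrak^{-1})]$, while $C_\mfrak\in A[Z]$ is monic of degree $|\mfrak|$. Consequently $M_\mfrak^J(Z)/C_\mfrak(Z)$ admits a Laurent expansion at $Z=\infty$ of the form $\sum_{i\geq 1}\alpha_i Z^{-i}$ with $\alpha_i\in A[\zeta_1,\dots,\zeta_s,\eC(\mfrak^{-1})]$; after the substitution $Z^{-1}=\mfrak\,u_\mfrak$ and division by $\mfrak$ this becomes $\sum_{i\geq 1}\alpha_i\,\mfrak^{i-1}\,u_\mfrak^i$, whose coefficients still lie in $A[\zeta_1,\dots,\zeta_s,\eC(\mfrak^{-1})]$. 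The only remaining obstacle is convergence of this series at $u_\mfrak(z)$, which amounts to showing that for $|z|_\Im$ sufficiently large, $|u_\mfrak(z)|$ is strictly smaller than the distance from $0$ to the nearest pole of the rational function, namely a point $1/(\mfrak\,\eC(b/\mfrak))$ with $b\in A/\mfrak A\setminus\{0\}$; this follows from the growth estimate on $|\eC(z/\mfrak)|$ provided by Lemma \ref{lemmaQ2}, since the $|\eC(b/\mfrak)|$ are bounded while $|\eC(z/\mfrak)|$ dominates them once $|z|_\Im$ is large enough.
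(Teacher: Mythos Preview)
Your proof is correct and follows essentially the same route as the paper's: the first identity is obtained exactly as in the paper by splitting the sum over $A$ into residues modulo $\mfrak$, invoking \eqref{umfrakdefid}, and matching with the definition of $M_\mfrak^J$ via $C_\mfrak(Z-\eC(b/\mfrak))=C_\mfrak(Z)$. For the second assertion the paper expands each summand $\frac{1}{\eC(z/\mfrak)-\eC(b/\mfrak)}$ as a geometric series in $\mfrak\,\eC(b/\mfrak)\,u_\mfrak$ and then sums over $b$, obtaining explicit coefficients \eqref{evpsiuexpn}; your Laurent-expansion-at-$Z=\infty$ argument is the same computation in a slightly more abstract wrapper, and yields the same convergence condition $|\mfrak\,\eC(b/\mfrak)\,u_\mfrak(z)|<1$. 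The only caveat is that Lemma~\ref{lemmaQ2} as stated gives a two-sided bound on $|e_C(z)|$ rather than growth, so it is not quite the right reference for ``$|\eC(z/\mfrak)|$ dominates''; the paper itself simply assumes this threshold without further justification.
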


\begin{proof}
We have
\begin{eqnarray}
\nonumber \ev_\mfrak(\psi_J)(z) &=& \sum_{b \in A} (z-b)^{-1} \prod_{j \in J} b(\zeta_j) \\
&=& \nonumber \sum_{b \in A / \mfrak A} \prod_{j \in J} b(\zeta_j) \sum_{a \in \mfrak A} (z - b - a )^{-1} \\
&=& \nonumber \pitilde \sum_{b \in A / \mfrak A} u_\mfrak(z-b) \prod_{j \in J} b(\zeta_j) \\
&=& \label{eqlastline} \frac{\pitilde}{\mfrak} \sum_{b \in A / \mfrak A} \frac{\prod_{j \in J} b(\zeta_j)}{ \eC(z / \mfrak) - \eC(b / \mfrak) }.
\end{eqnarray}
From the third equality to the fourth, we have used the identity \eqref{umfrakdefid},
and the final identity comes from direct comparison with the definition of $M_\mfrak^J$ combined with the observation that $C_\mfrak(Z - \eC(b / \mfrak)) = C_\mfrak(Z),$ for all $b \in A / \mfrak A$.

Returning to \eqref{eqlastline}, and assuming that $|z|_\Im$ is sufficiently large so that, for all $b \in A/\mfrak A$ we have $|\mfrak e_C(b/\mfrak)u_\mfrak| < 1$, the second claim follows from the computation 
\begin{eqnarray}
\nonumber \ev_\mfrak(\psi_J/\pitilde) &=& \sum_{b \in A / \mfrak A} \frac{ u_\mfrak\prod_{j \in J} b(\zeta_j)}{ 1 - \mfrak \eC(\frac{b}{\mfrak}) u_\mfrak } \\
\label{evpsiuexpn} &=&  \sum_{k \geq 0} u_\mfrak^{k+1} \sum_{b \in A/\mfrak A} (\mfrak \eC(\frac{b}{\mfrak}))^k \prod_{j \in J} b(\zeta_j). \end{eqnarray}
\end{proof}

\begin{Corollary} \label{chievalcor} We suppose that $s=1$.
For every prime $\mfrak=\mathfrak{p}$ and all $z \in \CC_\infty$, we have
\[\ev_{\pfrak}(\chi_t(z)) = \pfrak^{-1} M_\pfrak(\eC(z / \pfrak)).\]
\end{Corollary}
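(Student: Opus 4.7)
The plan is to combine the identity from the proof of Theorem \ref{thrudy1} with Proposition \ref{evchiprop} in the case $s=1$, $J=\Sigma_1=\{1\}$, and then use the functional equation of the Carlitz exponential.

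First, I would recall that the proof of Theorem \ref{thrudy1} establishes, via Theorem \ref{pellarinperkins} with $s=1$, the identity
\[
\psi_1(z) \;=\; \pitilde\, u(z)\, \chi_t(z), \qquad z \in \CC_\infty \setminus A,
\]
which, since $u(z) = 1/\eC(z)$, rewrites as $\eC(z)\,\psi_1(z) = \pitilde\,\chi_t(z)$. Now I would apply $\ev_\pfrak$ to both sides; since $\eC(z)$ and $\pitilde$ do not involve the variable $t$, they are fixed by $\ev_\pfrak$, and so
\[
\eC(z)\,\ev_\pfrak(\chi_t(z)) \;=\; \pitilde^{-1}\eC(z)\,\ev_\pfrak(\psi_1(z)) \cdot \pitilde \cdot \pitilde^{-1} \;=\; \eC(z)\,\pitilde^{-1}\ev_\pfrak(\psi_1(z)).
\]
(I am keeping track of factors of $\pitilde$ carefully here.) Applying Proposition \ref{evchiprop} with $s=1$ and $J=\{1\}$ gives
\[
\pitilde^{-1}\ev_\pfrak(\psi_1(z)) \;=\; \frac{1}{\pfrak} \cdot \frac{M_\pfrak(\eC(z/\pfrak))}{C_\pfrak(\eC(z/\pfrak))}.
\]
The next step is to use the fundamental functional equation of the Carlitz exponential, $\eC(az) = C_a(\eC(z))$ for $a \in A$, applied with $a = \pfrak$ and the substitution $z \mapsto z/\pfrak$, to get $\eC(z) = C_\pfrak(\eC(z/\pfrak))$. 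Combining these gives
\[
\ev_\pfrak(\chi_t(z)) \;=\; \pfrak^{-1}\, M_\pfrak(\eC(z/\pfrak))
\]
for all $z \in \CC_\infty \setminus A$.

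Finally, to extend the identity to all $z \in \CC_\infty$, I would argue by continuity/entireness: the left side is entire in $z$ because $\chi_t : \CC_\infty \to \EE$ is $\EE$-entire (by Lemma \ref{chilem}) and the evaluation map $\ev_\pfrak : \EE \to \CC_\infty$ is a continuous $\CC_\infty$-algebra homomorphism, so $z \mapsto \ev_\pfrak(\chi_t(z))$ is a $\CC_\infty$-entire function; the right side is evidently entire, as $\eC$ is entire and $M_\pfrak$ is a polynomial. Two entire functions on $\CC_\infty$ agreeing on the complement of the discrete set $A$ must agree everywhere.

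The only mildly subtle point is the bookkeeping with the factors of $\pitilde$ and the passage $\psi_1 = \pitilde u \chi_t$, which is not an identity stated explicitly in the excerpt but is extracted from the proof of Theorem \ref{thrudy1}; I would state it cleanly at the start of the proof. The appeal to $\eC(\pfrak z') = C_\pfrak(\eC(z'))$ with $z' = z/\pfrak$ is the key algebraic input, and no further obstacles arise.
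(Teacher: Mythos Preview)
Your proposal is correct and follows exactly the approach of the paper, which simply states that the result is immediate from Theorem \ref{thrudy1} (more precisely, the identity $\psi_1 = \pitilde\, u\, \chi_t$ obtained in its proof) and Proposition \ref{evchiprop}. Your display tracking the factors of $\pitilde$ is somewhat awkwardly written---the cleanest route is to write $\ev_\pfrak(\chi_t(z)) = \eC(z)\cdot \pitilde^{-1}\ev_\pfrak(\psi_1(z))$ directly and then substitute Proposition \ref{evchiprop} and $\eC(z)=C_\pfrak(\eC(z/\pfrak))$---but the argument is sound.
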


\begin{proof}
This is immediate from Theorem \ref{thrudy1} and the previous proposition.
\end{proof}

One would like to describe the precise order of vanishing in $u_\mfrak$ of the evaluations $\ev_{\mfrak}(\psi_s)$ for all square-free conductors $\mfrak$ and positive integers $s$, or equivalently to determine the degree in $Z$ of $M_\mfrak(Z)$. This appears to be difficult in general, but it is possible in the case of $s=1$. We tackle this problem in the next section.

We quickly mention that we may give an upper bound on the order of vanishing of $\ev_\pfrak(\psi_1)$ analytically using Remark \ref{chigrwthinfty} and Theorem \ref{thrudy1}. Starting from the identity $\psi_1=\widetilde{\pi}u\chi_t$, for all $z$ such that $|z|_{\Im}$ is big enough, we have
$\|\psi_1\|=C|u|^{1-1/q}$, for some positive real constant $C$. Hence, using the characterization of the Gauss norm $\|f\| = \sup_{|t| \leq 1}|f(t)|$,
for $\pfrak$ a prime, we obtain $|\ev_{\pfrak}(\psi_1)| \leq C|u_\pfrak|^{\frac{|\pfrak|}{q}(q-1)}$, for a new constant $C$ provided that $|z|_{\Im}$ is big enough. We now know that $\ev_{\pfrak}(\psi_1)\in\CC_\infty[[u_\pfrak]]$, and it follows that
$\ev_{\pfrak}(\psi_1) \in u_\pfrak^{\frac{|\pfrak|}{q}(q-1)}\CC_\infty[[u_\pfrak]].$

\subsection{Gauss-Thakur sums and the polynomial $M_\pfrak$} \label{Mpolyprops}
We now focus our attention completely on the interpolation polynomials $M_\pfrak$ in order to prove Theorems \ref{angpelomeg} and \ref{degcoeffthm} of the introduction.

Let $\pfrak$ be a monic irreducible of degree $d$, and let $\lambda = \eC(\pfrak^{-1})$ be a fixed element of Carlitz $\pfrak$-torsion. Choose a root $\zeta$ of $\pfrak$, and let $\chi$ be the Teichm\"uller character determined by $\theta \mapsto \zeta$. 

The following Gauss sum was first introduced by Thakur in \cite{THA} and already appeared in the introduction,
\[g(\chi) := \sideset{}{'}\sum_{a \in A(d)} \chi(a)^{-1} C_a(\lambda).\]
The sum $g(\chi)$ is a non-zero element of $A[\zeta,\lambda]$. We recall setting $g(\chi^{-1}) := (-1)^d \pfrak / g(\chi)$, which is the Gauss-Thakur sum for the character $\chi^{-1}$ and is an element of $A[\zeta,\lambda]$ by \cite[Prop. 15.2]{ANG&PEL1}. 
We also recall that for each $a \in (A/\pfrak A)^\times$, there is a unique element $\sigma_a$ of the Galois group of $K(\zeta, \lambda)$ over $K(\zeta)$ satisfying $\sigma_a(\lambda) = C_a(\lambda) = \eC(a / \pfrak)$ and that $g(\chi)$ satisfies
\begin{equation}\label{basicGTeq}
\sigma_a(g(\chi)) = a(\zeta) g(\chi)
\end{equation} for all $a \in (A/\pfrak A)^\times$. 

\begin{Lemma}\label{propdegree}
The following identity holds in $g(\chi^{-1}) A[\zeta][Z]$,
\begin{equation} \label{Mchicoeffseq}
M_\pfrak(Z) = (-1)^d g(\chi^{-1}) \sum_{j = 0}^{d-1} Z^{q^j} \sum_{a \in A(d) \setminus A(j) } a(\zeta)^{-1} E_j(a). \end{equation}
\end{Lemma}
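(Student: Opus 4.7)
The plan is to show that the polynomial $P(Z)$ defined as the right-hand side of \eqref{Mchicoeffseq} coincides with $M_\pfrak(Z)$ by verifying that both agree at the $q^d$ torsion points $\{\eC(a/\pfrak) : a \in A/\pfrak A\}$. Since $P(Z)$ is $\FF_q$-linear of degree at most $q^{d-1} < q^d$ by construction, and $M_\pfrak(Z)$ has degree strictly less than $q^d$ by the remark preceding Proposition \ref{evchiprop}, this agreement will force $P = M_\pfrak$ by uniqueness of Lagrange interpolation.

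The first key step is to evaluate $P(\lambda)$ at $\lambda := \eC(1/\pfrak)$. Interchanging the order of summation and invoking the vanishing $E_j(a) = 0$ whenever $\deg a < j$, the double sum should regroup as
\[\sum_{j=0}^{d-1}\lambda^{q^j}\sum_{a \in A(d)\setminus A(j)} a(\zeta)^{-1}E_j(a) = \sideset{}{'}\sum_{a \in A(d)} a(\zeta)^{-1} C_a(\lambda) = g(\chi).\]
Combined with the defining identity $g(\chi^{-1}) = (-1)^d \pfrak/g(\chi)$, this yields $P(\lambda) = (-1)^d g(\chi^{-1}) g(\chi) = \pfrak = M_\pfrak(\lambda)$.

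The second step propagates this single equality to all remaining torsion points via the Galois action on $K(\zeta,\lambda)$ over $K(\zeta)$. For $a \in (A/\pfrak A)^\times$, the automorphism $\sigma_a$ fixes $K(\zeta)$, sends $\lambda$ to $\eC(a/\pfrak)$, and, as a direct consequence of \eqref{basicGTeq}, satisfies $\sigma_a(g(\chi^{-1})) = a(\zeta)^{-1} g(\chi^{-1})$. Since the inner coefficient sums $\sum_{a'} a'(\zeta)^{-1} E_j(a')$ belong to $A[\zeta]$, they are left untouched by $\sigma_a$. Applying $\sigma_a$ to $P(\lambda) = \pfrak$ therefore produces $a(\zeta)^{-1} P(\eC(a/\pfrak)) = \pfrak$, that is, $P(\eC(a/\pfrak)) = \pfrak\,a(\zeta) = M_\pfrak(\eC(a/\pfrak))$. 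The case $a = 0$ is automatic from the $\FF_q$-linearity of $P$.

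Finally, the integrality assertion that the identity lies in $g(\chi^{-1}) A[\zeta][Z]$ will follow from $E_j(a) \in A$ for $a \in A$ (which is immediate from the fact, recalled in the paper, that $C_a$ lies in the ring of $\FF_q$-linear polynomials $A\{Z\}$), together with $a(\zeta)^{-1} \in \FF_q[\zeta] \subset A[\zeta]$ because $\zeta$ has finite multiplicative order. The only delicate bookkeeping occurs in the first step, where the index set $A(d) \setminus A(j)$ must be aligned precisely with the surviving range of $j$ in the expansion of $C_a(\lambda)$; once the vanishing of $E_j(a)$ for $\deg a < j$ is invoked this reduction is routine, and the Galois manipulation in the second step is then the cleanest part of the argument.
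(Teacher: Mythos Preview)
Your proof is correct and follows essentially the same approach as the paper's. The paper proceeds in the reverse order: it first writes $g(\chi)=\sum_i a_i\lambda^i$ abstractly, uses \eqref{basicGTeq} to see that $g(\chi)^{-1}\sum_i a_i Z^i$ interpolates the data $\eC(a/\pfrak)\mapsto a(\zeta)$ and hence equals $\pfrak^{-1}M_\pfrak(Z)$ by uniqueness, and only then expands $g(\chi)$ via $C_a(\lambda)=\sum_j E_j(a)\lambda^{q^j}$ to identify the $a_i$ explicitly; you instead start from the explicit right-hand side, evaluate at $\lambda$ to recover $g(\chi)$, and propagate by Galois. The two arguments are the same up to the order of these two steps.
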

\begin{proof}
Define the coefficients $a_i \in A[\zeta]$ via $g(\chi) := \sum_{i = 0}^{|\pfrak|-2} a_i \lambda^i$; one easily sees here that only those indices $i \equiv 1 \pmod{q-1}$ can appear. We observe immediately from \eqref{basicGTeq} that
the polynomial $g(\chi)^{-1}\sum_{i \geq 0} a_i Z^i$ takes the value $a(\zeta)$ for $Z = C_a(\lambda)$ for all $a \in A(d)$ and takes the value zero at $Z = 0$. Further its degree is strictly bounded above by $|A(d)| = |\pfrak|$. Since $\pfrak^{-1} M_\pfrak(Z)$ is the Lagrange interpolation polynomial of the data $C_a(\lambda) \mapsto a(\zeta)$ for $a \in A(d)$, we must have $\pfrak^{-1} M_\pfrak(Z) = g(\chi)^{-1}\sum_{i = 0}^{|\pfrak|-2} a_i Z^i$, by consideration on the degrees in $Z$ and uniqueness of the Lagrange interpolating polynomial. Replacing $g(\chi)^{-1}$ with $(-1)^d \pfrak^{-1} g(\chi^{-1})$, we obtain $M_\pfrak(Z) = (-1)^d g(\chi^{-1}) \sum_{i = 0}^{|\pfrak|-2} a_i Z^i$.

Now we derive an expression for the coefficients $a_i$ appearing above. From the definition of $g(\chi)$, we immediately obtain
\[ g(\chi) = \sideset{}{'}\sum_{a \in A(d) } a(\zeta)^{-1} \sum_{j = 0}^{d-1}  E_j(a) \lambda^{q^j} = \sum_{j = 0}^{d-1} \lambda^{q^j} \sideset{}{'}\sum_{a \in A(d) } a(\zeta)^{-1} E_j(a). \] 
\end{proof}

\begin{Remark}
{ It is an interesting question to determine the $\FF_q$-vector subspace of $\CC_\infty$ of roots to the polynomial $M_\pfrak(Z)$.}
\end{Remark}

We can give a more compact expression for the coefficients of $Z$ and $Z^{|\pfrak|/q}$ in $M_\pfrak$. We require a lemma, and some additional notation. 

Let $x$ and $y$ be indeterminates over $\FF_q$ and write $a(x) \in \FF_q[x]$, resp. $a(y) \in \FF_q[y]$, for the image of $a \in A$ under $\theta \mapsto x$, resp. $\theta \mapsto y$. We recall that for $j \geq 1$, we have set $\ell_j := (\theta  - \theta^{q^j})(\theta - \theta^{q^{j-1}})\cdots(\theta - \theta^q) \in A$ and $\ell_0 := 1$.

\begin{Lemma} \label{telescope}
For all integers $d \geq 1$, we have
\[\sum_{j = 0}^{d-1} \ell_j(x)^{-1} \prod_{k = 0}^{j-1} (y - x^{q^k}) = \ell_{d-1}(x)^{-1} \prod_{j = 1}^{d-1} (y - x^{q^j}).\]
\end{Lemma}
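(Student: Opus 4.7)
The plan is to prove the identity by induction on $d$, using a clean telescoping step that falls out from the recursion $\ell_d(x) = \ell_{d-1}(x)(x - x^{q^d})$.

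For the base case $d=1$, both sides equal $1$: the left-hand side consists of the single term $\ell_0(x)^{-1}$ times an empty product, and the right-hand side is $\ell_0(x)^{-1}$ times an empty product.

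For the inductive step, suppose the identity holds for $d$. Write
\[S_d(y) := \sum_{j=0}^{d-1} \ell_j(x)^{-1} \prod_{k=0}^{j-1}(y - x^{q^k}).\]
Then, by the inductive hypothesis and the factorization $\ell_d(x) = \ell_{d-1}(x)(x - x^{q^d})$, one has
\[S_d(y) = \ell_{d-1}(x)^{-1}\prod_{j=1}^{d-1}(y - x^{q^j}) = \ell_d(x)^{-1}(x - x^{q^d})\prod_{j=1}^{d-1}(y - x^{q^j}).\]
The new term is
\[\ell_d(x)^{-1}\prod_{k=0}^{d-1}(y - x^{q^k}) = \ell_d(x)^{-1}(y - x)\prod_{j=1}^{d-1}(y - x^{q^j}).\]
Adding, the prefactor $\ell_d(x)^{-1}\prod_{j=1}^{d-1}(y - x^{q^j})$ multiplies the sum $(x - x^{q^d}) + (y - x) = y - x^{q^d}$, which gives exactly $\ell_d(x)^{-1}\prod_{j=1}^{d}(y - x^{q^j})$, as required.

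The hardest (in fact only nontrivial) observation is the telescoping identity $(x - x^{q^d}) + (y - x) = y - x^{q^d}$; everything else is bookkeeping with the definition of $\ell_j$. I do not anticipate any obstacles: the proof is three lines once the induction is set up.
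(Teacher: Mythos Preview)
Your proof is correct and follows essentially the same inductive argument as the paper: both verify the base case $d=1$ as an equality of empty products and carry out the inductive step via the telescoping identity $(x - x^{q^d}) + (y - x) = y - x^{q^d}$ combined with the recursion $\ell_d(x) = \ell_{d-1}(x)(x - x^{q^d})$. The only cosmetic difference is that the paper also writes out the case $d=2$ explicitly and factors out $\ell_{d-1}(x)^{-1}$ rather than $\ell_d(x)^{-1}$, but the computation is the same.
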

\begin{proof}
We prove this claim by induction on $d$. When $d = 1$, we have two empty products, both equal to $1$ by convention. For $d = 2$, we have 
\[1 + \frac{(y - x)}{x - x^q} = \frac{(y - x^q)}{x - x^q}, \]
as required. Now assume the result holds for some $d \geq 2$. For such $d$ we have 
\begin{eqnarray*} \sum_{j = 0}^{d} \ell_j(x)^{-1} \prod_{k = 0}^{j-1} (y - x^{q^k}) &=&  \frac{1}{\ell_d(x)} \prod_{k = 0}^{d-1} (y - x^{q^k}) + \sum_{j = 0}^{d-1} \frac{1}{\ell_j(x)} \prod_{k = 0}^{j-1} (y - x^{q^k}) \\
 &=&  \frac{1}{\ell_d(x)} \prod_{k = 0}^{d-1} (y - x^{q^k}) + \frac{1}{\ell_{d-1}(x)} \prod_{j = 1}^{d-1} (y - x^{q^j}) \\
 &=& \frac{1}{\ell_{d-1}(x)}(\frac{y-x}{x-x^{q^d}} + 1)\prod_{j = 1}^{d-1} (y - x^{q^j}) \\
 &=& \frac{1}{\ell_d(x)}\prod_{j = 1}^{d} (y - x^{q^j}). 
\end{eqnarray*}
Hence, the result holds for $d+1$ as well and proves the claim. 
\end{proof}

\begin{Corollary} \label{exactcoeffs}
The coefficient of $Z^{|\pfrak|/q}$ in $M_\pfrak$ equals $(-1)^{d+1} g(\chi^{-1})\chi(\ell_{d-1})^{-1}$, and the coefficient of $Z$ equals $(-1)^{d+1} \pfrak \chi(\ell_{d-1})^{-1} (\theta - \zeta)^{-1} g(\chi^{-1})$; both are elements of $A[\zeta,\lambda]$.
\end{Corollary}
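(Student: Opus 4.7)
I would extract the two coefficients from the explicit expansion supplied by Lemma~\ref{propdegree},
\[
M_\pfrak(Z)=(-1)^d g(\chi^{-1})\sum_{j=0}^{d-1}S_j Z^{q^j},\qquad S_j:=\sideset{}{'}\sum_{a\in A(d)}\chi(a)^{-1}E_j(a),
\]
so that the coefficient of $Z^{|\pfrak|/q}=Z^{q^{d-1}}$ equals $(-1)^d g(\chi^{-1})S_{d-1}$ and the coefficient of $Z$ equals $(-1)^d g(\chi^{-1})S_0$. It therefore suffices to show $S_{d-1}=-\chi(\ell_{d-1})^{-1}$ and $S_0=-\pfrak\,\chi(\ell_{d-1})^{-1}(\theta-\zeta)^{-1}$, and the two displayed expressions of the corollary will follow. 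The $A[\zeta,\lambda]$-integrality of the coefficients is inherited from the containment $M_\pfrak\in g(\chi^{-1})A[\zeta][Z]$ already proved in Lemma~\ref{propdegree}.

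For $S_{d-1}$, I would exploit the $\FF_q$-linearity of $E_{d-1}$ together with its vanishing on $A(d-1)$ and the normalization $E_{d-1}(\theta^{d-1})=1$, read off the top coefficient of $C_{\theta^{d-1}}(Z)$. Writing $a=\alpha\theta^{d-1}+b$ with $\alpha\in\FF_q^\times$ and $b\in A(d-1)$ then gives $E_{d-1}(a)=\alpha$, so
\[
S_{d-1}=\sum_{\alpha\in\FF_q^\times}\alpha\sum_{v\in V}\frac{1}{\alpha\zeta^{d-1}+v},\qquad V:=\chi(A(d-1)).
\]
The $\FF_q$-linear polynomial $P(X):=\prod_{v\in V}(X-v)$ equals $\chi(d_{d-1})E^{\chi}_{d-1}(X)$ (applying $\chi$ coefficient-wise to $E_{d-1}$), so the classical Carlitz identity that $X$ appears in $E_{d-1}(X)$ with coefficient $1/\ell_{d-1}$ gives $P'(X)=\chi(d_{d-1})/\chi(\ell_{d-1})$. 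Using $V=-V$ and the equality $E^{\chi}_{d-1}(\zeta^{d-1})=\chi(E_{d-1}(\theta^{d-1}))=1$, I find $\sum_v(\alpha\zeta^{d-1}+v)^{-1}=P'(\alpha\zeta^{d-1})/P(\alpha\zeta^{d-1})=(\alpha\chi(\ell_{d-1}))^{-1}$. Summing over $\alpha$ yields $S_{d-1}=(q-1)\chi(\ell_{d-1})^{-1}=-\chi(\ell_{d-1})^{-1}$.

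For $S_0$, since $E_0(a)=a$, I would view the sum as a polynomial in an auxiliary indeterminate $T$:
\[
\widetilde{S}_0(T):=\sideset{}{'}\sum_{a\in A(d)}\frac{a(T)}{a(\zeta)}\in\FF_{q^d}[T],\qquad \deg_T\widetilde{S}_0\le d-1,
\]
with $S_0=\widetilde{S}_0(\theta)$. Using Frobenius $a(\zeta^{q^k})=a(\zeta)^{q^k}$ (valid because $a$ has $\FF_q$-coefficients) and the bijection $\chi\colon A(d)^\times\to\FF_{q^d}^\times$, I evaluate
\[
\widetilde{S}_0(\zeta^{q^k})=\sum_{y\in\FF_{q^d}^\times}y^{q^k-1},
\]
which equals $-1$ for $k=0$ and $0$ for $1\le k\le d-1$ by orthogonality on the cyclic group $\FF_{q^d}^\times$ (since $0<q^k-1<q^d-1$). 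The candidate polynomial $Q(T):=-\prod_{j=1}^{d-1}(T-\zeta^{q^j})/\chi(\ell_{d-1})=-\pfrak/[\chi(\ell_{d-1})(T-\zeta)]$ takes the same values at the same $d$ points $\zeta,\zeta^q,\ldots,\zeta^{q^{d-1}}$, so two degree-$\le(d-1)$ polynomials over $\FF_{q^d}$ agreeing at $d$ distinct points coincide, giving $S_0=Q(\theta)=-\pfrak\,\chi(\ell_{d-1})^{-1}(\theta-\zeta)^{-1}$.

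Plugging these values into the formula from Lemma~\ref{propdegree} yields precisely the coefficients claimed. The step I expect to be the main obstacle is the computation of the inner sum in $S_{d-1}$: it requires recognizing $P$ as a Galois-conjugated copy of $d_{d-1}E_{d-1}$ and invoking the standard identity $[X]E_{d-1}(X)=1/\ell_{d-1}$, which is where sign and normalization slips are most likely to creep in. By contrast, the $S_0$ argument is essentially Lagrange interpolation at the Frobenius conjugates of $\zeta$, made possible by the observation that both $\widetilde{S}_0$ and $Q$ are polynomials in $T$ of bounded degree.
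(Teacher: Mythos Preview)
Your proof is correct. Both you and the paper start from the expansion of Lemma~\ref{propdegree} and reduce to evaluating $S_{d-1}$ and $S_0$. For $S_{d-1}$ the two arguments are essentially the same: the paper groups the sum by $\FF_q^\times$-orbits and quotes Carlitz's identity $\sum_{a\in A^+(d-1)}a^{-1}=1/\ell_{d-1}$, while your logarithmic-derivative computation is a repackaging of the same fact (the coefficient of $X$ in $E_{d-1}(X)$ equals $1/\ell_{d-1}$).

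For $S_0$ the routes diverge genuinely. The paper first establishes the two-variable identity $\sum_{a\in A^+(j)}a(y)/a(x)=\ell_j(x)^{-1}\prod_{k=0}^{j-1}(y-x^{q^k})$ by appeal to \cite{Car35}, then invokes the telescoping Lemma~\ref{telescope} to collapse the sum over $j$. You instead observe that $\widetilde S_0(T)=\sum_{a\in A(d)}' a(T)/a(\zeta)$ is a polynomial in $T$ of degree at most $d-1$, compute its values at the $d$ Frobenius conjugates $\zeta,\zeta^q,\ldots,\zeta^{q^{d-1}}$ via the orthogonality relation $\sum_{y\in\FF_{q^d}^\times}y^{q^k-1}=-\delta_{k,0}$, and then match it against the explicit degree-$(d-1)$ polynomial $Q(T)=-\pfrak/[\chi(\ell_{d-1})(T-\zeta)]$ by Lagrange interpolation. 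Your argument is entirely self-contained, bypassing both the external reference to \cite{Car35} and Lemma~\ref{telescope}; the paper's approach, in exchange, yields the more general two-variable identity $\sum_{j=0}^{d-1}\sum_{a\in A^+(j)}a(y)/a(x)=\ell_{d-1}(x)^{-1}\prod_{j=1}^{d-1}(y-x^{q^j})$ as a by-product, which your interpolation at the specific points $T=\zeta^{q^k}$ does not recover.
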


\begin{proof}
For non-negative integers $j$, let $A^+(j)$ denote the set of all elements of $A^+$ of degree $j$ in $\theta$.

From \eqref{Mchicoeffseq}, the coefficient of $Z^{|\pfrak|/q}$ equals
\[(-1)^{d+1} g(\chi^{-1}) \sum_{a \in A^+(d-1)} a(\zeta)^{-1} = (-1)^{d+1} g(\chi^{-1}) \chi(\ell_{d-1})^{-1};\]
the latter equality follows from the work of Carlitz \cite[(9.09)]{Car35}.

Similarly, the coefficient of $Z$ equals
\[(-1)^{d+1} g(\chi^{-1}) \sum_{j = 0}^{d-1} \sum_{a \in A^+(j)} a(\zeta)^{-1} a.\]
One may easily calculate that 
\[\sum_{a \in A^+(j)}  \frac{a(y)}{a(x)} = \frac{1}{\ell_{j}(x)} \prod_{k = 0}^{j-1}(y - x^{q^k}),\] 
by comparing zeros of the left and right hand sides above using \cite[(9.15)]{Car35} and the next displayed equation therein and comparing the coefficients of $y^j$, which is again given by \cite[(9.09)]{Car35}. 

Summing over $0 \leq j \leq d-1$, applying Lemma \ref{telescope}, and replacing $x$ by $\zeta$ and $y$ by $\theta$, we obtain the expression above for the coefficient of $Z$. 
\end{proof}

\subsubsection{Proof of Theorem \ref{angpelomeg}}
It suffices to compare the coefficients of $z$ on both the left and right sides of the identity of Cor. \ref{chievalcor}. On the left side we have $\pitilde\ev_\zeta((\theta - t)\omega(t))^{-1}$, while on the right we have $(-1)^{d+1} \pitilde \pfrak^{-1} (\theta - \zeta)^{-1} \chi(\ell_{d-1})^{-1} g(\chi^{-1})$, by Cor. \ref{exactcoeffs}. Hence, using the definition of $g(\chi^{-1}) = (-1)^d \pfrak g(\chi)^{-1}$, we obtain $\ev_\zeta(\omega) = -\chi(\ell_{d-1}) g(\chi)$, as desired. \qed
%\hfill $\qed$

\subsubsection{Proof of Theorem \ref{degcoeffthm}}
This is an immediate consequence of Prop. \ref{evchiprop} and Cor. \ref{exactcoeffs} just above. \qed \medskip

\begin{Remark}
Let $\pfrak$, $d$ and $\chi$ be as in Theorem \ref{degcoeffthm}. From \eqref{evpsiuexpn} and Theorem \ref{degcoeffthm}, one may deduce the vanishing of the sums $\sum_{b \in (A/\pfrak A)^\times} \eC(b/\pfrak)^k \chi(b)$, for $k = 0,1,,\dots,\frac{|\pfrak|(q-1)}{q}-2$, and the identity
\[ g(\chi^{-1})  = (-1)^{d+1}\pfrak^{\frac{|\pfrak|(q-1)}{q}} \chi(\ell_{d-1}) \sum_{b \in (A/\pfrak A)^\times } \eC(b/\pfrak)^{\frac{|\pfrak|(q-1)}{q}-1} \chi(b).\]
We leave the easy details to the reader. 
\end{Remark}

\subsection{Questions in closing} We conclude with a speculative subsection. Let us fix an integer $s\geq 1$.
As Theorem \ref{pellarinperkins} shows, the functions $\psi_s$ are elements of the $\TT_s$-algebra $\mathcal{R}_s := \TT_s[u,u^{-1}, \{\chi^J\}_{J \subseteq \Sigma_{s}}]$. It follows from Corollary \ref{chievalcor} that for all monic $\mfrak=\pfrak_1\cdots\pfrak_s$, as above, and all $h \in \mathcal{R}_s$, we have $ev_\mfrak(h) \in \CC_\infty(u_\mfrak)$ for every choice of roots $\zeta_i$ of $\pfrak_i$, as we saw for $\psi_s$.

We define the sub-$\TT_s$-algebra $\mathcal{R}_s^0 \subseteq \mathcal{R}_s$ whose
elements are the $h \in \mathcal{R}_s$ such that $\ev_\mfrak(h) \in \CC_\infty[[u_\mfrak]]$ for all $\pfrak_1,\ldots,\pfrak_s$ and all roots $\zeta_i$
of $\pfrak_i$. As proven in Proposition \ref{evchiprop}, we have $\psi_J \in \mathcal{R}_s^0$, for all $J \subseteq \Sigma_{s}$.

We denote by $\TT_{s,i}$ the completion of $\CC_\infty[t_1,\ldots,\widehat{t_i},\ldots,t_s]\subset\TT_s$ for the Gauss norm (the hat denotes an omitted term
and the completion is canonically identified to a subring of $\TT_s$).
We also denote by $\varphi_i$ the $\TT_{s,i}$-linear
endomorphism of $\mathbb{T}_s$ uniquely defined by $\varphi_i(t_i)=t_i^q$.
\begin{Lemma} The ring $\mathcal{R}_s^0$ is $\varphi_i$-stable for all $i$.
That is, for all $f\in\mathcal{R}_s^0$ and all $i=1,\ldots,s$, we have
$\varphi_i(f)\in \mathcal{R}_s^0$.
\end{Lemma}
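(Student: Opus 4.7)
The plan is to exploit the simple observation that, on the generators of $\mathcal{R}_s$, the composition $\ev_\mfrak \circ \varphi_i$ agrees with a different evaluation map $\ev_\mfrak'$ of the same shape as those in the definition of $\mathcal{R}_s^0$, the only change being that the chosen root $\zeta_i$ of $\pfrak_i$ is replaced by $\zeta_i^q$.

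First I would check that $\varphi_i$ is well-defined on the whole of $\mathcal{R}_s$. The functions $u = 1/\eC(z)$ and $u_\mfrak = 1/(\mfrak\,\eC(z/\mfrak))$ do not involve the $t_j$ and are fixed by every $\varphi_i$. For $j \neq i$, $\chi_{t_j}(z) \in \CC_\infty\langle t_j\rangle \subseteq \TT_{s,i}$, hence is also fixed by $\varphi_i$. For $j = i$, $\chi_{t_i}(z) \in \TT$ is a genuine Tate series in $t_i$, and $\varphi_i$ acts on it by the substitution $t_i \mapsto t_i^q$. So $\varphi_i$ extends uniquely to a $\TT_{s,i}$-algebra endomorphism of $\mathcal{R}_s$.

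Next comes the key identity. Since $\ev_\mfrak$ is the $\CC_\infty$-algebra map sending $t_j \mapsto \zeta_j$ for each $j$, and $\varphi_i$ replaces $t_i$ by $t_i^q$, we obtain for every $h \in \mathcal{R}_s$ that
\[
\ev_\mfrak(\varphi_i(h)) = \ev_\mfrak^{(i)}(h),
\]
where $\ev_\mfrak^{(i)}$ is the evaluation map at $(\zeta_1,\ldots,\zeta_{i-1},\zeta_i^q,\zeta_{i+1},\ldots,\zeta_s)$. Because $\pfrak_i \in \FF_q[\theta]$ is irreducible, the Frobenius permutes its roots, so $\zeta_i^q$ is again a root of $\pfrak_i$. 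Consequently $\ev_\mfrak^{(i)}$ is one of the evaluation maps appearing in the very definition of $\mathcal{R}_s^0$.

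Now the conclusion is immediate: for $f \in \mathcal{R}_s^0$, the definition of $\mathcal{R}_s^0$ gives $\ev_\mfrak^{(i)}(f) \in \CC_\infty[[u_\mfrak]]$, since this holds uniformly in the choice of roots. Therefore $\ev_\mfrak(\varphi_i(f)) \in \CC_\infty[[u_\mfrak]]$ for every $\mfrak = \pfrak_1\cdots\pfrak_s$ and every choice of roots, proving $\varphi_i(f) \in \mathcal{R}_s^0$. The only potential obstacle is the book-keeping showing that $\varphi_i$ commutes with the substitution $t_j \mapsto \zeta_j$ in the way claimed, but this is a formal consequence of $\varphi_i$ being the $\TT_{s,i}$-linear endomorphism determined by $t_i \mapsto t_i^q$ together with $\TT_{s,i}$-bilinearity of multiplication.
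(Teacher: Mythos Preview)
Your proposal is correct and follows essentially the same approach as the paper's proof: both use that $\ev_\mfrak(\varphi_i(f))$ equals the evaluation of $f$ with $\zeta_i$ replaced by $\zeta_i^q$, which is again a root of $\pfrak_i$, so the hypothesis $f\in\mathcal{R}_s^0$ applies. Your write-up is somewhat more explicit (you spell out why $\varphi_i$ extends to $\mathcal{R}_s$ and state the identity $\ev_\mfrak\circ\varphi_i=\ev_\mfrak^{(i)}$), but the underlying argument is identical.
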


\begin{proof} We know that for any choice of $\zeta_1,\ldots,\zeta_s\in\FF_q^{ac}$, for $f\in\mathcal{R}_s^0$, we have $f|_{t_i=\zeta_i}\in\CC_\infty[[u_\mfrak]]$, where $\pfrak_i$ is the prime of $A$ vanishing at $\zeta_i
$ (for $i=1,\ldots,s$) and $\mfrak=\pfrak_1\cdots\pfrak_s$.
Let us write $g=\varphi_j(f)$ for a fixed $j\in\{1,\ldots,s\}$.
Since $\zeta_j^q$ is conjugate to $\zeta_j$ over $\FF_q$, we again get, thanks to
the hypothesis on $f$, that $g|_{t_i=\zeta_i}\in\CC_\infty[[u_\mfrak]]$.
This means that $g\in\mathcal{R}_s^0$. 
\end{proof}

\begin{Example} We set $s=1$, case in which we write $t=t_1$, $\TT=\TT_1$
and $\varphi=\varphi_1$. Observe that
$\varphi(f)=(\tau^{-1}(f))^q$. From this and  \eqref{AGFdiffeq}, \eqref{taudiffomega}, \eqref{papdiffeq}, and \eqref{identityrudy} we deduce
\begin{eqnarray*}
\tau(\psi_1) &=& (\widetilde{\pi}u)^{q-1}(\psi_1-L(\chi_t,1)), \text{ and} \\
\varphi(\psi_1) &=& (\widetilde{\pi}u)^{1-q}\psi_1^q+L(\chi_t^q,1).
\end{eqnarray*}
Since $\varphi(\psi_1),L(\chi_t^q,1)$ both belong to $\mathcal{R}_1^0$, we
deduce that $\psi_1^qu^{1-q}$ belongs to $\mathcal{R}_1^0$.
More generally, the reader can check that
for all $d\geq 0$, and with $L=L(\chi_t,1)$ and $\mu=\widetilde{\pi}u$,
\begin{equation*}
\mu^{1-q^d}\psi^{q^{d}}+\mu^{1-q^{d-1}}\varphi(L)^{q^{d-1}}+\mu^{1-q^{d-2}}\varphi^2(L)^{q^{d-2}}+\cdots+\mu^{1-q}\varphi^{d-1}(L)^{q}\in\mathcal{R}_1^0.
\end{equation*}

One interesting consequence of the $\tau$ and $\varphi$ difference equations satisfied by $\psi_1$ is that by comparing the expansions in $\TT[[z]]$ of the left and right sides one may obtain recursion relations satisfied by the $L$-functions $L(\chi_t,n)$ with $n \equiv 1 \pmod{q-1}$.
\end{Example}

The structure of the algebras $\mathcal{R}_s^0$ is likely to be quite intricate.

\begin{Question}
Determine explicitly a set of generators of $\mathcal{R}_s^0$.
\end{Question}

Another natural question is the following.

\begin{Question}
Is the algebra $\mathcal{R}_s^0$ a $\tau$-stable algebra, that is,
for all $f\in \mathcal{R}_s^0$, $\tau(f)\in \mathcal{R}_s^0$?
\end{Question}
In this direction we observe, if $s=1$, that the sub-$\TT$-algebra
\[\mathcal{R}_1^{1} := \TT[\varphi^k(\psi_1);k\geq 0]\] 
of $\mathcal{R}_1^0$ is also $\tau$-stable. One may hope that $\mathcal{R}_1^{0}=\mathcal{R}_1^{1}$, but we are unable to show it presently.


\begin{thebibliography}{99}

\bibitem{GAtm} G. Anderson, {\em $t$-motives}, Duke Math. J. {\bf 53} (1986), 457--502.

\bibitem{AT} G. Anderson \& D. Thakur, {\em Tensor powers of the Carlitz module and zeta values.} Ann. of Math. (2) {\bf 132} (1990), no. 1, 159--191.

\bibitem{ANG&PEL1} B. Angl\`es \& F. Pellarin. {\em Functional identities for $L$-series values in positive characteristic.} J. Number Theory {\bf 142} (2014), 223--251.
 
 \bibitem{ANG&PEL2} B. Angl\`es \& F. Pellarin. {\em Universal Gauss-Thakur sums and $L$-series.} Invent. Math. (2014). DOI 10.1007/s00222-014-0546-8

\bibitem{APT} B. Angl\`es, F. Pellarin \& F. Tavares Ribeiro. {\em Arithmetic of positive characteristic L-series values in Tate algebras.} To Appear in Compos. Math. (2015). arXiv:1402.0120

\bibitem{Car35} L. Carlitz, \emph{On certain functions connected with polynomials in a Galois field.} Duke Math. J. {\bf 1} (1935), 137--168. 

\bibitem{Gekcrelles} E.-U. Gekeler, \emph{On the de Rham isomorphism for Drinfeld modules}. J. Reine Angew. Math. \textbf{401} (1989), 188--208.

\bibitem{GekelerCICMA} E.-U. Gekeler, {\em Lectures on Drinfeld Modular Forms,} CICMA Lecture Notes 1999--2004.

\bibitem{EGP} A. El-Guindy \& M. Papanikolas, \emph{Identities for Anderson generating functions for Drinfeld modules}, Monatsh. Math. \textbf{173} (2014), 471--493.
 
 \bibitem{GOS} D. Goss, {\em Basic Structures of Function Field Arithmetic.} Springer, Berlin, 1996.
 
 \bibitem{PAP} M. Papanikolas, {\em Tannakian duality for Anderson-Drinfeld motives and algebraic independence of Carlitz logarithms.}
Invent. Math. {\bf 171} (2008), no. 1, 123--174.

\bibitem{PEL0} F. Pellarin. {\em Aspects de l'ind\'ependance alg\'ebrique en caract\'eristique non nulle.}
S\'eminaire Bourbaki. Vol. 2006/2007.
Ast\'erisque No. 317 (2008), Exp. No. 973, viii, 205--242.

\bibitem{PEL} F. Pellarin. {\em Values of certain $L$-series in positive characteristic.} Ann. of Math. (2) {\bf 176} (2012), 2055--2093.

\bibitem{PEL1} F. Pellarin. {\em Interpolating Carlitz Zeta Values (Analytic Number Theory: Arithmetic Properties of Transcendental Functions and their Applications).} RIMS K\=oky\=uroku, {\bf 1898} (2014), 58--69 .

\bibitem{PER} R. Perkins. {\em Explicit formulae for $L$-values in positive characteristic.} {\bf 278} Math. Z. (2014), 279--299.
 
 \bibitem{SCH} W. H. Schikhof. {\em Ultrametric Calculus: An Introduction to $p$-Adic Analysis.}
 Cambridge University Press, (1984).

\bibitem{THA} D. Thakur. {\em Gauss sums for $\FF_q[t]$.}
Invent. Math. {\bf 94} (1988), 105--112.

\bibitem{THAsign} D. Thakur, \emph{Behaviour of function field Gauss sums at infinity}, Bull. London Math. Soc. \textbf{25} (1993), 417--426.
\end{thebibliography}
\end{document}